\journalname{JOTA}
\newenvironment{claiminproof}[2]{\par\noindent\emph{ Claim #1: }\space#2}{ }
\newenvironment{claimproof}[2]{\par\noindent\emph{ Proof of Claim #1: }\space#2}{\hfill $\square$}
\def\namedlabel#1#2{\begingroup
    #2%
    \def\@currentlabel{#2}%
    \phantomsection\label{#1}\endgroup}
\newcommand{\B}{\mathbb{B}}
\newcommand{\R}{\mathbb{R}}
\begin{document}

\title{A Fixed-Point Approach to History-Dependent Sweeping Processes}
\titlerunning{A Fixed-Point Approach to History-Dependent Sweeping Processes}


\author{Mat\'ias Godoy, Manuel Torres-Valdebenito and  Emilio Vilches}

\institute{Mat\'ias Godoy \at 
            Facultad de Ingenier\'ia, Arquitectura y Dise\~no,
            Universidad San Sebasti\'an,
            Santiago, Chile\\
            matias.godoy@uss.cl
            \and
            Manuel Torres-Valdebenito \at
             Departamento de Ingenier\'ia Matem\'atica, Universidad de Chile,
             Santiago,  Chile\\
            manuel.torres@ug.uchile.cl
           \and
            Emilio Vilches \at Instituto de Ciencias de la Ingenier\'ia, 
               Universidad de O'Higgins, 
               Rancagua, Chile\\
               emilio.vilches@uoh.cl
}

\date{Received: date / Accepted: date}

\maketitle

\begin{abstract} 
   In this paper, we study the well-posedness of state-dependent and state-independent sweeping processes driven by prox-regular sets and perturbed by a history-dependent operator. Our approach, based on an enhanced version of Gronwall's lemma and fixed-point arguments, provides an efficient framework for analyzing sweeping processes. In particular, our findings recover all existing results for the class of Volterra sweeping processes and provide new insights into history-dependent sweeping processes. Finally, we apply our theoretical results to establish the well-posedness of a viscoelastic model with long memory.
\end{abstract}

\keywords{sweeping processes \and fixed point theory \and history-dependent operator \and prox-regular sets}
\subclass{34A60 \and  49J52 \and 49J53}


\section{Introduction}

The sweeping process is a differential inclusion involving normal cones to a family of moving sets. Since its introduction by J.-J. Moreau in a series of papers \cite{MR0637727, MR0637728, MR0508661}, it has become a natural mathematical modeling tool for problems with constraints, as well as for modeling various phenomena in contact mechanics, electrical circuits, crowd motion problems, among others. We refer to \cite{MR3753582, MR3908332, MR4403784} for further details. 

The sweeping process, originally studied by Moreau for convex sets, was later extended to the context of non-convex sets by various authors. Notably, the seminal works of Lionel Thibault \cite{MR1994056,MR2399209} developed the well-posedness theory for a family of uniformly prox-regular sets, which constitute a natural and broad framework for sweeping processes. In parallel, the perturbed and state-dependent cases were developed, generalizing classical results in differential equations and paving the way for further advancements in mathematical modeling. See, for example, \cite{MR2159846, MR2179241, MR3222899,MR2328857,MR3626639,MR3813128,MR4421900}.

Recently, a new variant of the sweeping process was proposed in \cite{MR4099068}. This variant, now known as the Volterra Sweeping Process, incorporates an integral term into the classical perturbed sweeping process. This integral term facilitates the generalization of Volterra differential equations and enables the modeling of constrained processes where the velocity depends on the trajectory at previous times. For well-posedness results, we refer to \cite{MR4492538, MR4422386, Vilches-2024}.

In this paper, we generalize all the aforementioned developments by considering a history-dependent operator, which accounts for constrained problems where the velocity depends on its history but not necessarily as an integral form. Our approach, which is based on an enhanced version of Gronwall’s lemma proved in \cite{Vilches-2024} and fixed-point arguments, provides an efficient framework for analyzing state-dependent and state-independent Volterra sweeping processes. 

It is worth emphasizing that history-dependent operators have been used to model viscoelastic materials \cite{MR4403784,MR4382708,MR3752610,MR2976197}, as in these materials, the displacement field depends on the trajectory. Many materials can be considered viscoelastic due to their properties, including soft and hard tissues (e.g., skin, cartilage, bone), synthetic polymers, elastomers, and others. For more details, we refer to \cite{lakes2009viscoelastic,christensen2012theory}. We apply our theoretical results to prove the well-posedness of a problem in contact mechanics with long memory.

The paper is organized as follows. After presenting some mathematical preliminaries, in Section \ref{section:3}, we summarize the main assumptions used throughout the paper. In Section \ref{section_4}, we establish the existence and uniqueness of solutions for state-independent sweeping processes driven by prox-regular sets and perturbed by history-dependent operators. Subsequently, in Section \ref{section-5}, we demonstrate the existence of solutions for state-dependent sweeping processes driven by prox-regular sets and perturbed by history-dependent operators. Then, in Section \ref{section-6}, as a consequence of our results, we establish the existence of solutions for Volterra sweeping processes. The paper concludes with an application to viscoelastic models with long memory.

\section{Preliminaries}

Let $(\mathcal{H},\langle \cdot,\cdot\rangle)$ be a real Hilbert space.  As usual, the norm of $\mathcal{H}$ is defined as $ \Vert x\Vert:=\sqrt{\langle x,x\rangle}$ and the closed unit ball is denoted as $\mathbb{B}$. The real numbers will be denoted by $\mathbb{R}$ and for $T>0$, we set $I:=[0,T]$. 
Let $S\subset \mathcal{H}$ be a nonempty closed set. The distance from a set $S$ to a point $x\in \mathcal{H}$ is defined as $d_{S}(x):=\inf_{y\in S}\Vert x-y\Vert$.  The set of points where the distance from $S$ to $x\in \mathcal{H}$ is attained is denoted by $\operatorname{Proj}_{S}(x)$. This set is possibly empty and, when it consists of a single point, is denoted by $\operatorname{proj}_S(x)$.  \\
Given two closed sets $A,B\subset\mathcal{H}$, the excess of $A$ over $B$ is defined as  $$\operatorname{exc}(A;B) := \sup_{x\in A} d_{B}(x).$$ Moreover, it is well-known that $\operatorname{exc}(A;B) = \inf\{\varepsilon>0: A\subset B+\varepsilon\B\}$.  Additionally, the Hausdorff distance between $A$ and $B$ is defined as
\begin{equation*}
    \operatorname{Haus}(A,B):=\max\{\operatorname{exc}(A;B),\operatorname{exc}(B;A)\},
\end{equation*}
which can be characterized through the formula (see, e.g., \cite[Lemma~3.74]{MR2378491}):
\begin{eqnarray*}
  \operatorname{Haus}(A,B)= \sup_{x\in \mathcal{H}}\vert d_A(x)-d_B(x)\vert.
\end{eqnarray*}
We refer to \cite{MR2378491} for more details.
\subsection{Convex and Variational Analysis Tools}
Given a function $f\colon\mathcal{H}\to\R\cup\{+\infty\}$, we say that $f\in \Gamma_0(\mathcal{H})$ if $f$ is proper, convex, and lower semicontinuous (l.s.c.). Given $f\in\Gamma_{0}(\mathcal{H})$, we say that $x^{*}$ is an element of the Fénchel subdifferential of $f$ at $x\in\mathcal{H}$, denoted by $\partial f(x)$, if 
$$
    f(y) \geq f(x) + \langle x^{*},y-x\rangle \quad\textrm{for all }y\in\mathcal{H}. 
$$
The Legendre-Fenchel conjugate $f^{*}\colon\mathcal{H}\to \R\cup\{+\infty\}$ of a function $ f\in \Gamma_0(\mathcal{H})$ is defined by
$$
    f^{*}(x^{*}) = \sup_{x\in\mathcal{H}}\{\langle x^{*},x\rangle - f(x)\} \quad\textrm{ for all }x^{*}\in\mathcal{H}.
$$
Moreover, it is well-known that the following equivalences hold:
$$
    x^{*} \in\partial f(x) \iff x \in\partial f^{*}(x^{*}) \iff \langle x^{*},x\rangle = f(x) + f^{*}(x^{*}).
$$
Besides, given a non-empty, closed, and convex set $ C \subset \mathcal{H} $, the indicator function of $C$ is defined as $\iota_{C} \colon \mathcal{H} \to \mathbb{R} \cup \{ +\infty \} $, given by
\begin{eqnarray*}
    \iota_{C}(x) = 
    \begin{cases}
        0       & \text{if } x \in C; \\ 
        +\infty & \text{if } x \notin C,
    \end{cases}
\end{eqnarray*}
and the support function of $C$ is defined as $\sigma_{C} \colon \mathcal{H} \to \mathbb{R} \cup \{ +\infty \}$, given by
\begin{eqnarray*}
    \sigma_{C}(x^{*}) = \sup_{x \in C} \langle x^{*}, x \rangle \quad \text{for all } x^{*} \in \mathcal{H}.
\end{eqnarray*}
Moreover, the following relations hold $\sigma_{C} = (\iota_{C})^{*}$ and $\iota_{C} = (\sigma_{C})^{*}$.

Given a closed and nonempty set $S\subset \mathcal{H}$ and $x\in S$. We say that $v$ belongs to the proximal normal cone $N^{P}(S;x)$ if there exists $\sigma \geq 0$ such that
$$
\langle v,y-x\rangle \leq \sigma\Vert y-x\Vert^2 \textrm{ for all } y\in S.
$$
Whenever $x\notin S$, we set $N^{P}(S;x)=\emptyset$. We refer to \cite{MR1488695} for more details about proximal calculus. 
Now, we recall the concept of a uniformly prox-regular set. Introduced by Federer in the finite-dimensional setting (see \cite{MR110078}) and later developed by Rockafellar, Poliquin, and Thibault in \cite{MR1694378},   prox-regularity generalizes and unifies the classes of convex sets and nonconvex bodies with $C^2$ boundary. \begin{definition}
    Let $S$ be a closed subset of $\mathcal{H}$ and $\rho\in ]0,+\infty]$. We say that $S$ is $\rho-$uniformly prox-regular provided that, for all $x\in S$ and all $v\in N^P(S;x)\cap \mathbb{B}$ one has 
    \begin{equation*}
       x\in \operatorname{Proj}_S(x+tv) \textrm{ for any } t\leq \rho.
    \end{equation*}
\end{definition}
It is important to emphasize that convex sets are $\rho-$uniformly prox-regular for any $\rho>0$. 
The following result summarizes the main characterization of uniform prox-regularity. We refer to \cite{MR2768810,MR4659163} for further results.	
\begin{proposition}
    \label{prop:prox}
    Let $S$ be a nonempty closed subset of $\mathcal{H}$ and $\rho\in ]0,+\infty]$. The following assertions are equivalent:
    \begin{enumerate}
        \item[(i)] The set $S$ is $\rho-$uniformly prox-regular.
        \item[(ii)] For all $x, x^{\prime}\in S$ and $\zeta \in N^{P}(S;x)$ one has
        \begin{equation*}
            \langle \zeta , x^{\prime} - x \rangle \leq \frac{1}{2\rho} \|\zeta \| \|x^{\prime}-x\|^{2}.
        \end{equation*}
        \item[(iii)] For any $x_{i}\in S$, $\zeta_{i}\in N^{P}(S;x_{i})\cap \mathbb{B}$ with $i=1,2$ one has
        \begin{eqnarray*}
            \langle \zeta_{1}-\zeta_{2} , x_{1}-x_{2} \rangle \geq - \frac{1}{\rho} \|x_{1}-x_{2}\|^{2}.
        \end{eqnarray*}
        that is, the set-valued mapping $x\mapsto N^{P}(S;x)\cap\B$ is $\frac{1}{\rho}$-hypomonotone.
        \item[(iv)] For any positive $\gamma<1$ the projection mapping $\operatorname{proj}_S$ is well-defined and Lipschitz continuous on $U^{\gamma}_{\rho}(S):=\{z\in \mathcal{H}: d_S(x)<\gamma \rho\}$ with $1/(1-\gamma)$ as a Lipschitz constant, i.e., 
        \begin{equation*}
            \Vert \operatorname{proj}_S(x_{1})-\operatorname{proj}_S(x_{2})\Vert \leq \frac{1}{1-\gamma}\|x_{1}-x_{2}\| \quad \textrm{ for all } x_1, x_2\in U^{\gamma}_{\rho}(S).
        \end{equation*}
    \end{enumerate}
\end{proposition}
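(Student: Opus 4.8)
The plan is to prove the four assertions equivalent by a cycle, with the direct pair (i)$\Leftrightarrow$(ii) as the computational core, the one-line implication (ii)$\Rightarrow$(iii), and the remaining content concentrated in (iv) and in closing the loop. Throughout I would lean on the elementary reformulation of the projection: for $x\in S$, a vector $v$, and $t>0$, one has $x\in\operatorname{Proj}_S(x+tv)$ if and only if $\langle v,y-x\rangle\le\frac{1}{2t}\|y-x\|^2$ for every $y\in S$, obtained by expanding the inequality $\|x+tv-y\|^2\ge\|x+tv-x\|^2$.

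For (i)$\Rightarrow$(ii): given $x'\in S$ and $\zeta\in N^P(S;x)$ with $\zeta\ne0$, I would set $v=\zeta/\|\zeta\|\in N^P(S;x)\cap\mathbb{B}$, apply the definition at the largest admissible scale $t=\rho$, and read off $\langle v,x'-x\rangle\le\frac{1}{2\rho}\|x'-x\|^2$; multiplying by $\|\zeta\|$ yields (ii) (with $\rho=+\infty$ letting $t\to\infty$ and recovering the convex inequality $\langle\zeta,x'-x\rangle\le0$). The converse (ii)$\Rightarrow$(i) is the same computation run backwards: (ii) together with $\|v\|\le1$ gives $2t\langle v,x'-x\rangle\le\frac{t}{\rho}\|x'-x\|^2\le\|x'-x\|^2$ for $t\le\rho$, which is precisely the projection inequality.

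Next, (ii)$\Rightarrow$(iii) follows by writing (ii) for the pairs $(x_1,\zeta_1)$ and $(x_2,\zeta_2)$ with test points $x_2$ and $x_1$ respectively, using $\|\zeta_i\|\le1$, and adding the two inequalities. For (ii)$\Rightarrow$(iv) I would fix $z_1,z_2\in U^\gamma_\rho(S)$, set $x_i=\operatorname{proj}_S(z_i)$ and $r_i=d_S(z_i)<\gamma\rho$, note $z_i-x_i\in N^P(S;x_i)$ with $\|z_i-x_i\|=r_i$, and apply (ii) to each with the other foot point as test point. Summing and expanding $\langle(z_1-x_1)-(z_2-x_2),x_2-x_1\rangle$ gives
\begin{equation*}
\Bigl(1-\tfrac{r_1+r_2}{2\rho}\Bigr)\|x_1-x_2\|^2\le\langle z_1-z_2,x_1-x_2\rangle\le\|z_1-z_2\|\,\|x_1-x_2\|,
\end{equation*}
and since $r_i<\gamma\rho$ the factor on the left exceeds $1-\gamma$, producing the Lipschitz constant $1/(1-\gamma)$; single-valuedness is the special case $z_1=z_2$. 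The genuine obstacle here is not the estimate but the \emph{existence} of a nearest point in the infinite-dimensional setting, where closed sets need not be proximinal. I would handle it by showing that any minimizing sequence for $d_S(z)$ is Cauchy, the prox-regular inequality (ii) controlling the defect of the midpoint from lying in $S$ in a parallelogram-law estimate, or alternatively by invoking the continuous differentiability of $\tfrac12 d_S^2$ on the tube.

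Finally I would close the loop with (iv)$\Rightarrow$(i), and here I expect the main difficulty. Given $x\in S$ and $v\in N^P(S;x)\cap\mathbb{B}$, the very definition of proximal normal already yields $x\in\operatorname{Proj}_S(x+t_0v)$ for \emph{some} $t_0>0$; the task is to propagate this up to the full radius $t=\rho$. I would study the map $t\mapsto\operatorname{proj}_S(x+tv)$ on $[0,\rho)$, which is well-defined and continuous by (iv) since $d_S(x+tv)\le t<\rho$, and argue that the set of $t$ for which it equals $x$ is both closed (by continuity) and open (the Lipschitz bound preventing the foot point from escaping), hence all of $[0,\rho)$. This boosting of the admissible scale from the a priori $t_0$ to the sharp value $\rho$ is exactly the delicate point that blocks a naive algebraic derivation of (ii) from (iii) (where testing against $\zeta_2=0$ loses a factor $2$); should the continuation argument prove too technical, the cleaner fallback is to cite the Poliquin--Rockafellar--Thibault characterization \cite{MR1694378}.
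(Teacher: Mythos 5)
The paper does not prove this proposition at all: it is quoted as a known characterization of uniform prox-regularity, with the reader referred to \cite{MR2768810,MR4659163}, so there is no in-paper argument to compare yours against. Your outline is a faithful reconstruction of the standard literature proof, and the easy legs --- (i)$\Leftrightarrow$(ii) via the quadratic expansion of $\|x+tv-y\|^2$, (ii)$\Rightarrow$(iii) by symmetrization, and the Lipschitz estimate in (ii)$\Rightarrow$(iv) once projections are known to exist --- are correct as written.

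The two places you flag as delicate are, however, exactly where the sketch stops short of a proof. First, for existence of nearest points on the tube, the ``minimizing sequence is Cauchy'' idea does not run directly on (ii): the points of a minimizing sequence carry no proximal normals, so (ii) gives you nothing to apply to them; the standard repair is to use the density of points admitting nearest points (Ekeland/Lau) together with your Lipschitz estimate on that dense set, and then pass to the limit. Second, in (iv)$\Rightarrow$(i) the openness of $\{t:\operatorname{proj}_S(x+tv)=x\}$ does not follow from ``the Lipschitz bound preventing the foot point from escaping'': that bound only shows $\operatorname{proj}_S(x+tv)$ lies within $O(t-t_0)$ of $x$, and a nearby but distinct foot point is not excluded; one needs a further quantitative comparison of $d_S(x+tv)^2$ with $t^2\|v\|^2$ (or the $C^1$ regularity of $d_S$ on the tube) to conclude. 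Since you explicitly offer the fallback of citing \cite{MR1694378}, which is in substance what the paper itself does, the verdict is: correct strategy and correct easy implications, with the two genuinely hard steps identified but delegated to the literature rather than proved.
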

\subsection{Nonlinear Analysis Tools}
Let $A$ be a bounded subset of $\mathcal{H}$. We define the \textit{Kuratokwki measure of non-compactness} of $A$, $\alpha(A)$, as
\begin{equation*}
    \alpha(A) := \inf\big\{d>0\ :\ \text{$A$ admits a finite cover by sets of diameter $\leq d$}\big\},
\end{equation*}
and the \textit{Hausdorff measure of non-compactness} of $A$, $\beta(A)$, as
\begin{equation*}
    \beta(A) := \inf\big\{r>0\ :\ \text{$A$ can be covered by finitely many ball of radius $r$}\big\}.
\end{equation*}
In \textit{Hilbert spaces} the relation between these two concepts is given by the inequality for $A\subset\mathcal{H}$ bounded:
$$
    \sqrt{2}\beta(A)\leq \alpha(A)\leq 2\beta(A).
$$
The following proposition gathers the main properties of \textit{Kuratowski and Hausdorff measures of non-compactness} (see \cite[Proposition 7.2]{MR0787404}, \cite[Section 9.2, Proposition 9.1]{MR1189795}).
\begin{proposition}     
    \label{prop:PropertiesAboutNonCompactnessMeasures}
    Let $\mathcal{H}$ be an infinite dimensional Hilbert space and $B$, $B_{1}$, $B_{2}$ be bounded subsets of $\mathcal{H}$. Let $\gamma$ be either the \textit{Kuratowski or the Hausdorff measures of non-compactness}. Then:
    \begin{enumerate}
        \item[(a)] $\gamma(B) = 0$ if and only if $\overline{B}$ is compact.
        \item[(a)] $\gamma(\lambda B) = |\lambda|\gamma(B)$ for every $\lambda\in\R$.
        \item[(b)] $\gamma(B_{1}+B_{2}) \leq \gamma(B_{1})+\gamma(B_{2})$.
        \item[(c)] $B_{1}\subset B_{2}$ implies $\gamma(B_{1}) \leq \gamma(B_{2})$.
        \item[(d)] $\gamma(\operatorname{conv}B) = \gamma(B)$.
        \item[(e)] $\gamma(\overline{B}) = \gamma(B)$.
    \end{enumerate}
\end{proposition}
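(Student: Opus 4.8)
The plan is to prove all six items directly from the definitions, treating $\gamma$ as a generic measure (either $\alpha$ or $\beta$) and exploiting two elementary facts: a finite cover of a set by pieces of small diameter (resp. small balls) transforms predictably under the set operations appearing in the statement, and taking diameters/radii interacts well with these operations. All items except the convex-hull invariance (d) follow almost immediately, so I expect (d) to be the only step requiring a genuine idea, namely a compactness argument on the finite-dimensional simplex.

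For the compactness characterization, if $\overline{B}$ is compact then it is totally bounded, so for every $\varepsilon>0$ it admits a finite cover by balls of radius $\varepsilon$, giving $\gamma(B)=0$; conversely $\gamma(B)=0$ forces $B$ to be totally bounded, and since $\mathcal{H}$ is complete the closed set $\overline{B}$ is then complete and totally bounded, hence compact. For the homogeneity item $\gamma(\lambda B)=|\lambda|\gamma(B)$, I would observe that $U\mapsto\lambda U$ carries a cover of $B$ to a cover of $\lambda B$, multiplying diameters (resp. radii) by $|\lambda|$, and is invertible for $\lambda\neq0$, the case $\lambda=0$ being trivial. Subadditivity uses that if $\{U_i\}$ covers $B_1$ and $\{V_j\}$ covers $B_2$, then $\{U_i+V_j\}$ covers $B_1+B_2$ with $\operatorname{diam}(U_i+V_j)\le\operatorname{diam}(U_i)+\operatorname{diam}(V_j)$; taking infima yields the claim. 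Monotonicity is immediate since any cover of $B_2$ also covers $B_1\subset B_2$. For the closure item, the inequality $\gamma(\overline{B})\ge\gamma(B)$ is monotonicity, while for the reverse one replaces each covering piece by its closure (preserving diameters, resp. producing a closed ball of the same radius) and uses that a finite union of closed sets is closed, so a cover of $B$ upgrades to a cover of $\overline{B}$ of the same mesh.

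The crux is the convex-hull invariance. Since $B\subset\operatorname{conv}B$, monotonicity gives $\gamma(B)\le\gamma(\operatorname{conv}B)$, so only $\gamma(\operatorname{conv}B)\le\gamma(B)$ needs work. Fix $\varepsilon>0$ and cover $B$ by finitely many sets $S_1,\dots,S_n$ of diameter at most $\delta:=\gamma(B)+\varepsilon$; replacing each $S_i$ by its convex hull leaves the diameter unchanged (a standard fact in normed spaces) and makes each $S_i$ convex, whence $\operatorname{conv}B\subset\operatorname{conv}\big(\bigcup_i S_i\big)=\{\sum_i\lambda_i x_i:\lambda\in\Delta,\ x_i\in S_i\}$, where $\Delta$ is the standard simplex in $\R^n$. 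The key point is that $\Delta$ is compact: I would cover it by finitely many pieces $\Delta_1,\dots,\Delta_m$ of small $\ell^1$-diameter $\eta$, pick a representative $\lambda^{(k)}\in\Delta_k$, and estimate, for $\lambda,\mu\in\Delta_k$ and $x_i,x_i'\in S_i$,
\begin{equation*}
\Big\|\sum_i\lambda_i x_i-\sum_i\mu_i x_i'\Big\|\le M\eta+\delta+M\eta=\delta+2M\eta,
\end{equation*}
where $M:=\sup_{x\in B}\|x\|<\infty$; the outer terms come from varying the coefficients within $\Delta_k$ and the middle term from varying the points within the $S_i$. Thus the $m$ sets indexed by $\Delta_1,\dots,\Delta_m$ cover $\operatorname{conv}B$ with diameters at most $\delta+2M\eta$, giving $\gamma(\operatorname{conv}B)\le\gamma(B)+\varepsilon+2M\eta$; letting $\eta\to0$ and then $\varepsilon\to0$ finishes the Kuratowski case, and the Hausdorff case is identical after covering $\Delta$ by small balls.

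Finally, since the statement allows $\gamma$ to be either $\alpha$ or $\beta$, I would run each argument once in the language of diameters of covering pieces and once with radii of covering balls, the two being interchangeable throughout. One could instead prove everything for $\alpha$ and transfer to $\beta$ via the sandwich $\sqrt{2}\,\beta\le\alpha\le2\beta$ recorded above, but this does not by itself yield the clean identities in the homogeneity, convex-hull, and closure items, so the direct covering arguments are preferable.
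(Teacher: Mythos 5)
Your proof is correct, but note that the paper does not prove this proposition at all: it is stated as a known preliminary with a citation to Deimling's books, so there is no in-paper argument to compare against. Your items on compactness, homogeneity, subadditivity, monotonicity, and closure are the immediate covering manipulations one would expect, and your treatment of the convex-hull invariance is exactly the classical argument (essentially Darbo's): reduce to convex covering pieces, parametrize $\operatorname{conv}\bigl(\bigcup_i S_i\bigr)$ by the simplex $\Delta$, and use compactness of $\Delta$ to split the diameter estimate into a coefficient-perturbation term $2M\eta$ and a within-piece term $\delta$. One small point to tighten: for the bound $\|x_i\|\leq M$ with $M=\sup_{x\in B}\|x\|$ you need the covering sets $S_i$ to lie inside $B$, which is not automatic from the definition of $\alpha$ or $\beta$; either replace each $S_i$ by $S_i\cap B$ before taking convex hulls (this preserves the covering and can only shrink diameters), or settle for $\|x_i\|\leq M+\delta$, which changes nothing in the limit. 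With that adjustment the argument is complete for both measures, and your closing remark is apt: the sandwich $\sqrt{2}\beta\leq\alpha\leq 2\beta$ would only transfer inequalities up to constants, so the direct covering proofs are indeed the right way to obtain the exact identities.
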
  

We will denote by $C(I;\mathcal{H})$ the set of all \textit{continuous functions} from $I$ to $\mathcal{H}$. The norm of the \textit{uniform convergence} on $C(I;\mathcal{H})$ will be denoted by $\|\cdot\|_{\infty}$. We denote by $L^{1}([0,T];\mathcal{H})$ the space of \textit{$\mathcal{H}$-valued Lebesgue integrable functions} defined over the interval $[0,T]$. We say that $x\in\operatorname{AC}([0,T];\mathcal{H})$ if there exists $f\in L^{1}([0,T];\mathcal{H})$ and $x_{0}\in\mathcal{H}$ such that
\begin{eqnarray*}
    x(t) = x_{0} + \int_{0}^{t} f(s)ds\ \forall t\in[0,T].
\end{eqnarray*}

The following result, proved in \cite{Vilches-2024}, is an enhanced version of the classical Gronwall inequality.
\begin{lemma}[Enhanced Gronwall Inequality]\label{lem:enhanced}
   Let $I:=[0,T]$, and let $\rho\colon I \to \mathbb{R}$ be a nonnegative absolutely continuous function. Let $K_1, K_2, \varepsilon \colon I \to \mathbb{R}_+$, and  $K_3 \colon I\times I \to \mathbb{R}_+$ be nonnegative measurable functions such that
    $$
        t\mapsto K_1(t) \textrm{ and } t\mapsto K_2(t)\int_{0}^t K_3(t,s)ds \textrm{ are  integrable.}
    $$
    Suppose that
    $$
        \dot{\rho}(t)\leq \varepsilon(t)+K_1(t)\rho(t)+K_2(t)\int_{0}^t K_3(t,s)\rho(s)ds \quad \textrm{ for a.e. } t\in I.
    $$
    Then,  one has
    $$
    \rho(t)\leq \rho(0)\exp\left(\int_{0}^t \gamma(s)ds\right)+\int_{0}^t \varepsilon(s)\exp\left(\int_s^t \gamma(\tau)d\tau\right)ds \quad \textrm{ for all }t\in I,
    $$
    where  $\gamma(t):=K_1(t)+K_2(t)\int_{0}^t K_3(t,s)ds$.
\end{lemma}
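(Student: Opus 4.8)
The plan is to compare $\rho$ with the function
$$w(t) := \rho(0)\exp\left(\int_0^t \gamma(s)\,ds\right) + \int_0^t \varepsilon(s)\exp\left(\int_s^t \gamma(\tau)\,d\tau\right)ds,$$
which is exactly the right-hand side we must dominate, and to prove $\rho(t)\le w(t)$ for all $t\in I$. The first observation is that $w$ is the unique absolutely continuous solution of the linear problem $\dot w(t)=\varepsilon(t)+\gamma(t)w(t)$ with $w(0)=\rho(0)$; this is well posed since $\gamma\in L^1(I)$ by the integrability hypotheses on $K_1$ and on $K_2\int_0^t K_3(t,\cdot)$. Because $\rho(0)\ge 0$ and $\varepsilon,\gamma\ge 0$, both terms defining $w$ are nonnegative, so $w\ge 0$, and therefore $\dot w=\varepsilon+\gamma w\ge 0$; hence $w$ is nondecreasing on $I$. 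This monotonicity is the property that makes the whole argument run.

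Next I would estimate the difference $v:=\rho-w$, which is absolutely continuous with $v(0)=0$. Subtracting $\dot w=\varepsilon+\gamma w$ from the differential inequality for $\rho$ and expanding $\gamma(t)=K_1(t)+K_2(t)\int_0^t K_3(t,s)\,ds$ gives, for a.e.\ $t$,
$$\dot v(t)\le K_1(t)v(t)+K_2(t)\int_0^t K_3(t,s)\big(\rho(s)-w(t)\big)\,ds.$$
Here the monotonicity of $w$ enters: for $s\le t$ one has $w(s)\le w(t)$, so $\rho(s)-w(t)\le \rho(s)-w(s)=v(s)$; since $K_3\ge 0$, integrating preserves the inequality and yields
$$\dot v(t)\le K_1(t)v(t)+K_2(t)\int_0^t K_3(t,s)\,v(s)\,ds \quad\text{for a.e. }t\in I,\qquad v(0)=0.$$
Thus the problem reduces to showing that any absolutely continuous $v$ with $v(0)=0$ satisfying this homogeneous integro-differential inequality must be nonpositive.

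To close this, I would introduce the running maximum $V(t):=\max_{0\le \tau\le t}v(\tau)$. Since $v(0)=0$, $V$ is nonnegative, nondecreasing, and absolutely continuous (one checks $0\le V(t_2)-V(t_1)\le \int_{t_1}^{t_2}|\dot v|$). At a.e.\ $t$ one has either $\dot V(t)=0$, or $\dot V(t)>0$, in which case $V(t)=v(t)$ and $\dot V(t)=\dot v(t)$. In the latter case, using $v(s)\le V(s)\le V(t)$ for $s\le t$ together with $K_3\ge 0$,
$$\dot V(t)=\dot v(t)\le K_1(t)V(t)+K_2(t)V(t)\int_0^t K_3(t,s)\,ds=\gamma(t)V(t),$$
while in the former case $\dot V(t)=0\le \gamma(t)V(t)$ trivially. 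Hence $\dot V\le \gamma V$ a.e.\ with $V(0)=0$, and the classical Gronwall lemma (applicable since $\gamma\in L^1(I)$) forces $V\equiv 0$. Consequently $v\le 0$, that is $\rho\le w$, which is the claimed estimate.

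I expect the main obstacle to be precisely this last step: one cannot apply the enhanced inequality to $v$ directly, since that would be circular and $v$ need not be signed, so the signed integro-differential inequality must be converted into a genuine Gronwall estimate. The running-maximum device accomplishes this cleanly, but it rests on the measure-theoretic facts that $V$ is absolutely continuous and that $\dot V(t)\neq 0$ implies $V(t)=v(t)$ with $\dot V(t)=\dot v(t)$; verifying these is the technical heart of the argument. Everything else is bookkeeping with the nonnegativity of the data and the explicit formula for $w$.
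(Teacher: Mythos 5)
Your argument is correct, but note that the paper does not actually prove Lemma \ref{lem:enhanced}: it imports it verbatim from the reference [Vilches-2024], so there is no in-paper proof to compare against. Your proof stands on its own. The three ingredients all check out: (i) the comparison function $w$ solves $\dot w=\varepsilon+\gamma w$, $w(0)=\rho(0)$, and is nondecreasing because $\rho(0),\varepsilon,\gamma\ge 0$; (ii) monotonicity of $w$ converts the mixed term $\int_0^t K_3(t,s)(\rho(s)-w(t))\,ds$ into $\int_0^t K_3(t,s)v(s)\,ds$ with $v=\rho-w$; and (iii) the running maximum $V(t)=\max_{[0,t]}v$ is absolutely continuous, and at a.e.\ point where $\dot V(t)>0$ one has $V(t)=v(t)$ (otherwise $V$ is constant on a right neighbourhood of $t$, forcing $\dot V(t)=0$) and $\dot V(t)\le\dot v(t)$ (since $V-v\ge 0$ attains a minimum at $t$), which yields $\dot V\le\gamma V$ and hence $V\equiv 0$ by classical Gronwall. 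For the last step it suffices to have the one-sided bound $\dot V(t)\le\dot v(t)$ rather than the equality you assert, but that is exactly what the minimum argument gives. The only implicit assumption is that $\varepsilon\in L^1(I)$ so that $w$ is finite and absolutely continuous; the lemma as stated only requires $\varepsilon$ measurable, but if $\int_0^{t}\varepsilon=+\infty$ the claimed bound is trivially true for such $t$, so nothing is lost. This is a clean, self-contained route that reduces the enhanced inequality to the classical one via a supersolution comparison; the published proof in the cited reference proceeds differently (estimating $\sup_{[0,t]}\rho$ directly), but your version is equally valid.
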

The following result will be used in the proof of Theorem \ref{teo:Existence1}.
\begin{lemma}
    \label{lem:cancel}
    Let $\Theta\colon I \to \mathbb{R}$ be a nonnegative continuous function. Assume that $\alpha$ and $\beta$ are two nonnegative integrable functions such that 
    \begin{equation*}
        \Theta^{2}(t) \leq\int_{0}^{t} \alpha(s)\Theta(s)ds + \int_{0}^{t} \beta(s)\Theta^{2}(s)ds \quad \textrm{ for all } t\in I.
    \end{equation*}
    Then, for all $t\in I$, one has
    \begin{equation*}
        \Theta(t) \leq \int_0^t \exp\left(\int_s^t \beta(\tau)d\tau\right)\alpha(s)ds.
    \end{equation*}
\end{lemma}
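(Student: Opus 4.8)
The plan is to linearize the quadratic inequality by introducing the nondecreasing majorant
\[
R(t) := \int_{0}^{t}\alpha(s)\Theta(s)\,ds + \int_{0}^{t}\beta(s)\Theta^{2}(s)\,ds,
\]
so that the hypothesis reads $\Theta^{2}(t)\leq R(t)$, i.e. $\Theta(t)\leq\sqrt{R(t)}$ for every $t\in I$. Since $\Theta$ is continuous and $\alpha,\beta$ are integrable, the integrands $\alpha\Theta$ and $\beta\Theta^{2}$ are integrable, whence $R$ is absolutely continuous with $R(0)=0$ and, for a.e. $t$,
\[
\dot R(t) = \alpha(t)\Theta(t) + \beta(t)\Theta^{2}(t) \leq \alpha(t)\sqrt{R(t)} + \beta(t)R(t),
\]
where I use $\Theta\leq\sqrt{R}$ and $\Theta^{2}\leq R$. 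This replaces the quadratic term $\beta\Theta^{2}$ by the term $\beta R$, which is linear in $R$, at the cost of a square root in the $\alpha$ contribution.

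Next I would pass to the quantity $\sqrt{R}$ that actually bounds $\Theta$. The difficulty is that $\sqrt{\cdot}$ fails to be Lipschitz at the origin, so $\sqrt{R}$ need not be absolutely continuous where $R$ vanishes and the chain rule is not directly available; this is the only genuinely delicate point of the argument. To bypass it I regularize: for $\epsilon>0$ set $u_{\epsilon}:=\sqrt{R+\epsilon}\geq\sqrt{\epsilon}>0$. Because $R$ is absolutely continuous and $x\mapsto\sqrt{x+\epsilon}$ is Lipschitz on $[0,\infty)$, the function $u_{\epsilon}$ is absolutely continuous, and at a.e. $t$, using $\sqrt{R}\leq u_{\epsilon}$ and $R\leq u_{\epsilon}^{2}$,
\[
\dot u_{\epsilon}(t) = \frac{\dot R(t)}{2u_{\epsilon}(t)} \leq \frac{\alpha(t)\sqrt{R(t)}+\beta(t)R(t)}{2u_{\epsilon}(t)} \leq \frac{\alpha(t)}{2} + \frac{\beta(t)}{2}\,u_{\epsilon}(t).
\]

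Now $u_{\epsilon}$ is a nonnegative absolutely continuous function satisfying a linear differential inequality, so I would apply the Enhanced Gronwall Inequality (Lemma~\ref{lem:enhanced}) with $K_{1}=\beta/2$, $K_{2}\equiv 0$, and $\varepsilon(\cdot)=\alpha/2$ (so that $\gamma=\beta/2$), which is just the classical Gronwall lemma in this case and yields
\[
u_{\epsilon}(t) \leq \sqrt{\epsilon}\,\exp\!\left(\tfrac12\int_{0}^{t}\beta\right) + \frac12\int_{0}^{t}\alpha(s)\exp\!\left(\tfrac12\int_{s}^{t}\beta(\tau)\,d\tau\right)ds.
\]
Letting $\epsilon\downarrow 0$ and recalling $\Theta(t)\leq\sqrt{R(t)}=\lim_{\epsilon\downarrow 0}u_{\epsilon}(t)$ gives $\Theta(t)\leq\frac12\int_{0}^{t}\alpha(s)\exp(\tfrac12\int_{s}^{t}\beta)\,ds$. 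Finally, since $\alpha,\beta\geq 0$, both $\tfrac12\leq 1$ and $\tfrac12\int_{s}^{t}\beta\leq\int_{s}^{t}\beta$, so this bound is dominated by $\int_{0}^{t}\exp(\int_{s}^{t}\beta(\tau)\,d\tau)\,\alpha(s)\,ds$, which is the claimed estimate. (The factor $\tfrac12$ shows the asserted inequality is not tight, but this is immaterial for the conclusion.)
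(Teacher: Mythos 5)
Your argument is correct, but it follows a genuinely different route from the paper's. The paper linearizes the hypothesis directly: it sets $t^{\ast}:=\inf\{s:\Theta(s)>0\}$ (so that $\Theta\equiv 0$ on $[0,t^{\ast}]$), and for $t>t^{\ast}$ takes suprema over $[0,t]$ on both sides of the assumed inequality and divides by $\sup_{[0,t]}\Theta>0$ to obtain the linear integral inequality $\Theta(t)\leq\int_{0}^{t}\alpha+\int_{0}^{t}\beta\Theta$, after which the classical Gronwall lemma gives exactly the stated bound. You instead work with the majorant $R$, differentiate it, and run a Bihari/Ou--Iang-type argument on $\sqrt{R}$, using the regularization $u_{\epsilon}=\sqrt{R+\epsilon}$ to sidestep the failure of the chain rule where $R$ vanishes --- which is precisely the delicate point, and you handle it correctly; the final comparison with the stated (weaker) bound is also fine since $\alpha,\beta\geq 0$. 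What each approach buys: yours produces the sharper estimate $\Theta(t)\leq\tfrac12\int_{0}^{t}\alpha(s)\exp\bigl(\tfrac12\int_{s}^{t}\beta\bigr)ds$, at the cost of the absolute-continuity/regularization machinery; the paper's division trick is shorter and entirely elementary, needing only the integral form of Gronwall, but it delivers only the unimproved constants. Both are complete proofs of the lemma as stated.
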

\begin{proof}  
    We will prove that
    \begin{equation}\label{Gron}
        \Theta(t)\leq \int_0^t \alpha(\tau)d\tau+\int_0^t \beta(\tau)\Theta(\tau)d\tau \textrm{ for all } t\in I,
    \end{equation}
    which, by virtue of the classical Gronwall's inequality, will imply the result. \newline
    If $\Theta\equiv 0$ the result is obvious. Otherwise, assume that $\Theta \not\equiv 0$ and  define $t^{\ast}:=\inf \{s\in I \colon \Theta(s)>0\}$.   On the one hand, we observe that $\Theta(t)=0$ for any $t\in [0,t^{*}]$, hence, \eqref{Gron} holds for any $t\in [0,t^{*}]$. On the other hand, from hypothesis, it is clear that for all $t\in ]t^{\ast},T]$, one has
    \begin{equation*}
        \sup_{s\in [0,t]}\Theta(s)^2\leq \sup_{s\in [0,t]}\Theta(s)\left(\int_0^t \alpha(\tau)d\tau+\int_0^t \beta(\tau)\Theta(\tau)d\tau\right),
    \end{equation*}
    which, being $\sup_{s\in [0,t]}\Theta(s)>0$ for all $t\in ]t^{\ast},T]$, implies that for $t\in ]t^{\ast},T]$
    \begin{equation*}
        \Theta(t)\leq \int_0^t \alpha(\tau)d\tau+\int_0^t \beta(\tau)\Theta(\tau)d\tau,
    \end{equation*}
    which finishes the proof. 
\end{proof}
An operator $\mathcal{R}\colon C(I;\mathcal{H})\to C(I;\mathcal{H})$ is called an \textit{History-Dependent Operator}  if there exists $\kappa>0$, such that for all $x,y\in C(I;\mathcal{H})$ and $t\in I$, one has
\begin{equation*}
    \|\mathcal{R}(x)(t) - \mathcal{R}(y)(t)\| \leq \kappa\int_{0}^{t} \|x(s) - y(s)\| ds.
\end{equation*}
For more details, we refer to \cite[Chapter 2]{MR3752610}. The following result provides a fixed point result for history-dependent operators (see, e.g.,  \cite[Chapter~2]{MR3752610}).
\begin{proposition}\label{teo:fixed}
    Let $\mathcal{S}\colon C(I;E)\to C(I;F)$ be a history dependent operator, where $E,F$ are two Banach spaces.  Then, the operator $\mathcal{S}$ has a unique fixed point.
\end{proposition}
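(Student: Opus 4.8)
The plan is to equip $C(I;E)$ with a weighted (Bielecki-type) norm under which $\mathcal{S}$ becomes a genuine contraction, and then invoke the classical Banach fixed-point theorem. Note first that for the statement to make sense we need the domain and codomain to coincide, since a fixed point satisfies $x=\mathcal{S}(x)$ with $x\in C(I;E)$ and $\mathcal{S}(x)\in C(I;F)$; so I read the result with $E=F$ and $\mathcal{S}\colon C(I;E)\to C(I;E)$, the defining inequality being
$$\|\mathcal{S}(x)(t)-\mathcal{S}(y)(t)\|\leq \kappa\int_0^t \|x(s)-y(s)\|\,ds \quad\text{for all }x,y\in C(I;E),\ t\in I.$$

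For a parameter $\lambda>0$ to be fixed later, define $\|x\|_\lambda:=\sup_{t\in I} e^{-\lambda t}\|x(t)\|$. The first step is to observe that this is a norm equivalent to $\|\cdot\|_\infty$, since $e^{-\lambda T}\|x\|_\infty \leq \|x\|_\lambda \leq \|x\|_\infty$; consequently $(C(I;E),\|\cdot\|_\lambda)$ is again a Banach space. The core computation is then straightforward: for any $x,y$ and $t\in I$, bounding $\|x(s)-y(s)\|=e^{\lambda s}\big(e^{-\lambda s}\|x(s)-y(s)\|\big)\leq e^{\lambda s}\|x-y\|_\lambda$ inside the integral gives
$$\|\mathcal{S}(x)(t)-\mathcal{S}(y)(t)\|\leq \kappa\|x-y\|_\lambda\int_0^t e^{\lambda s}\,ds = \frac{\kappa}{\lambda}\big(e^{\lambda t}-1\big)\|x-y\|_\lambda.$$
Multiplying by $e^{-\lambda t}$ and taking the supremum over $t\in I$ yields $\|\mathcal{S}(x)-\mathcal{S}(y)\|_\lambda\leq \tfrac{\kappa}{\lambda}\|x-y\|_\lambda$. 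Choosing $\lambda>\kappa$ makes the constant $\kappa/\lambda<1$, so $\mathcal{S}$ is a contraction on the Banach space $(C(I;E),\|\cdot\|_\lambda)$, and the Banach fixed-point theorem delivers a unique fixed point. Uniqueness transfers back to $\|\cdot\|_\infty$ automatically, since the property of being a fixed point does not depend on the chosen (equivalent) norm.

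As an alternative one could avoid introducing a new norm and instead prove by induction the estimate $\|\mathcal{S}^n(x)(t)-\mathcal{S}^n(y)(t)\|\leq \frac{(\kappa t)^n}{n!}\|x-y\|_\infty$ for all $n\geq 1$, whence $\|\mathcal{S}^n(x)-\mathcal{S}^n(y)\|_\infty\leq \frac{(\kappa T)^n}{n!}\|x-y\|_\infty$. Since $(\kappa T)^n/n!\to 0$, some iterate $\mathcal{S}^n$ is a contraction, and the standard corollary of Banach's theorem (a map whose $n$-th iterate is a contraction has a unique fixed point) again gives the conclusion. There is no serious obstacle in either route: the only points requiring a little care are the estimation inside the integral and the verification that the weighted norm is complete (equivalently, the induction in the alternative approach). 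The mild conceptual subtlety worth flagging is that one must pass to an equivalent weighted norm (or to a high iterate), because $\mathcal{S}$ itself need not be a contraction in $\|\cdot\|_\infty$.
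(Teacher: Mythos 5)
Your proof is correct; the paper itself gives no proof of this proposition, simply citing \cite[Chapter~2]{MR3752610}, and your second route (the inductive estimate $\|\mathcal{S}^n(x)(t)-\mathcal{S}^n(y)(t)\|\leq \frac{(\kappa t)^n}{n!}\|x-y\|_{\infty}$ followed by the iterate version of Banach's theorem) is precisely the argument given in that reference, while the Bielecki weighted-norm route is the standard equivalent variant. Your observation that the statement only makes sense with $E=F$ is also a correct reading of what is a slightly misstated proposition.
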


\section{Technical assumptions}\label{section:3}

For ease of presentation, in this section, we gather the hypotheses used along the  paper.
\begin{enumerate}
    \item[\namedlabel{Hf}{$(\mathcal{H}^f)$}] The function $f\colon I\times\mathcal{H}\to\mathcal{H}$ satisfies
    \begin{enumerate}
        \item[(i)] For each $x\in\mathcal{H}$, the map $t \mapsto f(t,x)$ is measurable. 
        \item[(ii)] There exists two nonnegative integrable functions $\alpha$ and $\beta$  such that 
        \begin{equation*}
            \Vert f(t,x)\Vert \leq \alpha(t) \Vert x\Vert + \beta (t) \quad\textrm{ for all } (t,x)\in I\times \mathcal{H}.
        \end{equation*}
        \item[(iii)] For each $r>0$, there exists a nonnegative constant $\kappa_{f}^{r}$ such that
        \begin{equation*}
            \|f(t,x) - f(t,y)\| \leq \kappa_{f}^{r}\|x-y\| \quad\textrm{ for all }x,y\in r\B \textrm{ and } t\in I.
        \end{equation*}
    \end{enumerate}
\end{enumerate}
\begin{enumerate}
    \item[\namedlabel{Hg}{$(\mathcal{H}^g)$}]  The function $g\colon I \times I\times \mathcal{H}\to \mathcal{H}$ satisfies
    \begin{enumerate}
        \item For each $x\in \mathcal{H}$, the map $(t,s) \mapsto g(t,s,x)$ is measurable.
        \item For all $r>0$, there exists an integrable function $\mu_{r}\colon I\to \mathbb{R}_+$ such that for all $(t,s)\in D$ 
        $$
        \Vert g(t,s,x)-g(t,s,y)\Vert \leq \mu_{r}(t)\Vert x-y\Vert \textrm{ for all } x,y\in r\mathbb{B}.
        $$
        Here $D:=\{(t,s)\in I\times I \colon s\leq t\}$. 
        \item There exists a nonnegative integrable function $\sigma\colon D\to \mathbb{R}$ such that 
        $$
        \Vert g(t,s,x)\Vert \leq \sigma(t,s)(1+\Vert x\Vert) \textrm{ for all } (t,s)\in D \textrm{ and } x\in \mathcal{H}. 
        $$
    \end{enumerate}
\end{enumerate}

\begin{enumerate}
    \item[\namedlabel{HR}{$(\mathcal{H}^{\mathcal{R}})$}] The operator $\mathcal{R}\colon C(I;\mathcal{H}) \to C(I;\mathcal{H})$ is history-dependent of constant $\kappa_{\mathcal{R}}$, that is, there exists  $\kappa_{\mathcal{R}}\geq 0$ such that  for all $x,y\in C(I;\mathcal{H})$ one has
    \begin{equation*}
        \|\mathcal{R}(x)(t) - \mathcal{R}(y)(t)\| \leq \kappa_{\mathcal{R}} \int_{0}^{t} \|x(s) - y(s)\| ds \textrm{ for all } t\in I.
    \end{equation*}
\end{enumerate}
\begin{enumerate}
    \item[\namedlabel{HC}{$(\mathcal{H}^{C})$}] The map $C\colon[0,T]\rightrightarrows\mathcal{H}$ has nonempty, closed and $\rho-$uniformly prox-regular values, for some $\rho>0$. Moreover, there exists an absolutely continuous function $v\colon[0,T]\to\R$ such that
    \begin{equation*}
        \operatorname{Haus}(C(t),C(s)) \leq |v(t) - v(s)| \quad\textrm{ for all }t,s\in I.
    \end{equation*}
\end{enumerate}
\begin{enumerate}
\item[\namedlabel{HCx}{$(\mathcal{H}^{C}_{x})$}] The map $C\colon[0,T]\times\mathcal{H}\rightrightarrows\mathcal{H}$ has nonempty, closed and $\rho-$uniformly prox-regular values, for some $\rho>0$. Moreover, the following conditions hold:
\begin{enumerate}
\item[(i)] There exists an  absolutely continuous function $v\colon[0,T]\to\R$ and $L\in[0,1[$, such that
    \begin{equation*}
        \operatorname{Haus}(C(t,x),C(s,y)) \leq |v(t) - v(s)| + L\|x-y\| \textrm{ for all }t,s\in I, x,y\in\mathcal{H}.
    \end{equation*}
    \item[(ii)] For any $A,B\subset\mathcal{H}$ bounded and $r>0$, the set $C(t,A)\cap B \cap r\mathbb{B}$ is relatively compact.
     \end{enumerate}
\end{enumerate}

\section{History-Dependent Sweeping Processes}\label{section_4}

In this section, we provide an existence and uniqueness result for the solutions of the history-dependent sweeping process:
\begin{equation}\label{eq:SP1}
    \left\{
    \begin{aligned}
                \dot{x}(t) &\in -N_{C(t)}(x(t)) + f(t,x(t)) + \mathcal{R}(x)(t) \quad\textrm{ for a.e. }t\in I, \\
                x(0)&= x_{0} \in C(0),
    \end{aligned}
    \right.
\end{equation}
where $C$, $f$ and $\mathcal{R}$ satisfy \ref{HC}, \ref{Hf} and \ref{HR}, respectively. Before providing the main result of this section,  we provide a basic existence result for the sweeping process with integrable perturbation. We refer to \cite[Proposition~1]{MR2179241} for the proof.
\begin{proposition}
    \label{prop:Existence} 
    Let $\mathcal{H}$ be a real Hilbert space, and suppose that $C(\cdot)$ satisfies \ref{HC}. Let $h\colon I\to \mathcal{H}$ be a single-valued integrable mapping. Then, for any $x_0\in C(0)$ there exists a unique absolutely continuous solution $x(\cdot)$ for the following differential inclusion:
    \begin{equation*}
        \left\{
        \begin{aligned}
            \dot{x}(t)&\in -N_{C(t)}(x(t))+h(t)& \textrm{ a.e. } t\in I,\\
            x(0)&=x_0.
        \end{aligned}
        \right.
    \end{equation*}
    Moreover, $x(\cdot)$ satisfies the following inequality:
    $$
        \Vert \dot{x}(t)-h(t)\Vert \leq \Vert h(t)\Vert +\vert \dot{v}(t)\vert \quad \textrm{ a.e. } t\in I.
    $$
\end{proposition}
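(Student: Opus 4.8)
Since the statement is the classical well-posedness result for sweeping processes driven by prox-regular sets with an integrable single-valued perturbation, the plan is to follow the two-pronged strategy: prove uniqueness by the hypomonotonicity of the proximal normal cone, and prove existence together with the velocity estimate by Moreau's catching-up (time-discretization) algorithm.

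For \textbf{uniqueness}, suppose $x_1,x_2$ are two absolutely continuous solutions with $x_1(0)=x_2(0)=x_0$. Writing $\zeta_i(t):=h(t)-\dot{x}_i(t)\in N_{C(t)}(x_i(t))$, I would apply Proposition \ref{prop:prox}(ii) to the pairs $(x_1,\zeta_1)$ and $(x_2,\zeta_2)$ and add the two resulting inequalities to obtain
\begin{equation*}
    \langle \zeta_1-\zeta_2, x_1-x_2\rangle \geq -\frac{1}{2\rho}\big(\|\zeta_1\|+\|\zeta_2\|\big)\|x_1-x_2\|^2 \quad \text{a.e. } t\in I.
\end{equation*}
Since $\dot{x}_1-\dot{x}_2=\zeta_2-\zeta_1$, this yields $\tfrac12\tfrac{d}{dt}\|x_1-x_2\|^2\leq \tfrac{1}{2\rho}(\|\zeta_1\|+\|\zeta_2\|)\|x_1-x_2\|^2$, and then invoking the velocity bound $\|\zeta_i\|=\|\dot{x}_i-h\|\leq \|h(t)\|+|\dot v(t)|$ (which is integrable because $h\in L^1$ and $v$ is absolutely continuous) gives a differential inequality of Gronwall type. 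As $\|x_1(0)-x_2(0)\|^2=0$, the classical Gronwall inequality forces $x_1\equiv x_2$.

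For \textbf{existence}, I would fix a partition $0=t_0<\dots<t_N=T$ and define iterates by
\begin{equation*}
    x_{i+1}:=\operatorname{proj}_{C(t_{i+1})}\Big(x_i+\int_{t_i}^{t_{i+1}} h(s)\,ds\Big), \qquad x_0\in C(0).
\end{equation*}
By \ref{HC} the sets are $\rho$-uniformly prox-regular and $\operatorname{Haus}(C(t_i),C(t_{i+1}))\leq |v(t_{i+1})-v(t_i)|$, so for a sufficiently fine mesh the argument of the projection lies in the prox-regularity tube $U^{\gamma}_{\rho}(C(t_{i+1}))$ and Proposition \ref{prop:prox}(iv) makes $\operatorname{proj}$ single-valued and Lipschitz there. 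Because $x_i\in C(t_i)$ gives $d_{C(t_{i+1})}(x_i)\leq |v(t_{i+1})-v(t_i)|$, the defining relation yields the discrete displacement estimate $\|x_{i+1}-x_i-\int_{t_i}^{t_{i+1}}h\|\leq \int_{t_i}^{t_{i+1}}\|h\|+|v(t_{i+1})-v(t_i)|$; interpolating the iterates affinely and passing to the mesh limit is exactly what produces the announced a.e.\ bound $\|\dot{x}(t)-h(t)\|\leq \|h(t)\|+|\dot v(t)|$. This uniform velocity control makes the derivatives of the interpolants bounded in $L^1$ (indeed equi-integrable) and the interpolants equicontinuous.

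The \textbf{main obstacle} is convergence of the approximate solutions in the infinite-dimensional Hilbert setting and passing to the limit in the inclusion. I would handle this by showing that the sequence of interpolants is Cauchy in $C(I;\mathcal{H})$: comparing two discretizations and again exploiting the hypomonotonicity estimate of Proposition \ref{prop:prox}(ii)/(iii) together with the enhanced Gronwall Lemma \ref{lem:enhanced} controls the pairwise distance by the mesh size. The limit $x$ is absolutely continuous and inherits the velocity estimate by weak lower semicontinuity; to conclude that $\dot{x}(t)-h(t)\in -N_{C(t)}(x(t))$ a.e., I would pass to the limit using Mazur's lemma to convert weak convergence of the velocities into strong convergence of convex combinations, and then use the strong-weak closedness of the graph of the proximal normal cone for prox-regular sets (a consequence of the hypomonotone characterization). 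The delicate point throughout is ensuring the projected points remain inside the prox-regularity tube and that the constant $\gamma<1$ in Proposition \ref{prop:prox}(iv) can be fixed uniformly along the discretization; this is secured by the absolute continuity of $v$, which makes the oscillation of $C$ arbitrarily small on fine meshes.
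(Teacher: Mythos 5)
The paper offers no proof of this proposition: it simply cites \cite[Proposition~1]{MR2179241} (Edmond--Thibault), and your sketch is essentially the standard catching-up argument given in that reference --- hypomonotonicity plus Gronwall for uniqueness, discrete projections with the Hausdorff estimate for the velocity bound, and a Cauchy-in-$C(I;\mathcal{H})$ comparison of discretizations to pass to the limit in infinite dimensions. The outline is sound as a reconstruction of the cited proof; note only that for uniqueness you do not actually need the velocity bound of the statement --- integrability of $\|\zeta_i\|=\|\dot{x}_i-h\|$ already follows from absolute continuity of the $x_i$ and $h\in L^1$, which avoids any appearance of circularity.
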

Now, we present the main result of this section. Based on a fixed-point result for history-dependent operators (see Proposition \ref{teo:fixed}), we prove the well-posedness for the problem \eqref{eq:SP1}.
\begin{theorem}
    \label{teo:Existence1}
    Assume that  \ref{HC}, \ref{Hf} and \ref{HR} hold. Then, for any initial condition $x_0\in C(0)$ there exists a unique absolutely continuous solution $x(\cdot)$ for  the problem \eqref{eq:SP1}. Moreover, the following bound holds:
    \begin{equation*}
        \begin{aligned}
            \|x(t)\|  \leq r(t)  \textrm{ for all }t\in I \textrm{ and } \|\dot{x}(t)\|  \leq q(t) \textrm{ for a.e. } t\in I,
        \end{aligned}
    \end{equation*}
    where, $r(\cdot)$ and $q(\cdot)$ are the functions defined by {\small 
    \begin{equation}\label{eq_rt}
        \begin{aligned}
            r(t)&:=\Vert x_0\Vert \exp\left(2\int_0^t (\alpha(s)+\kappa_{\mathcal{R}})ds \right)+\int_0^t \varepsilon(s)\exp\left(2\int_s^t (\alpha(\tau)+\kappa_{\mathcal{R}})d\tau \right)ds,\\
            q(t)&:= |\dot{v}(t)| + 2\alpha(t)r(t) + 2\beta(t) + 2\kappa_{\mathcal{R}}\int_{0}^{t} r(s)ds + 2\|\mathcal{R}(0)(t)\|,
        \end{aligned}
    \end{equation}}
    and $\varepsilon(t):=\vert \dot{v}(t)\vert +2\Vert \mathcal{R}(0)(t)\Vert +2\beta(t)$.
\end{theorem}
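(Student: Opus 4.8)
The plan is to recast \eqref{eq:SP1} as a fixed-point equation for a solution operator and then invoke the history-dependent fixed-point principle (Proposition \ref{teo:fixed}). The idea is to freeze the nonlinear data: to a candidate history $y$ we associate the unique solution of the \emph{linearly perturbed} sweeping process whose integrable perturbation is $h_y(t):=f(t,y(t))+\mathcal{R}(y)(t)$, and we show that the resulting operator $\mathcal{S}$ is itself history-dependent once restricted to an invariant ball where the local Lipschitz data of $f$ become global.

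First I would establish the a priori bounds, which simultaneously yield uniqueness and the invariant set. For an arbitrary solution $x$ of \eqref{eq:SP1}, Proposition \ref{prop:Existence} applied with $h(t):=f(t,x(t))+\mathcal{R}(x)(t)$ gives $\|\dot{x}(t)\|\le 2\|h(t)\|+|\dot{v}(t)|$ a.e. Bounding $\|h(t)\|$ by \ref{Hf}(ii) and by \ref{HR} (writing $\mathcal{R}(x)=(\mathcal{R}(x)-\mathcal{R}(0))+\mathcal{R}(0)$), and setting $\eta(t):=\|x(t)\|$, one obtains
\[
\dot{\eta}(t)\le \varepsilon(t)+2\alpha(t)\eta(t)+2\kappa_{\mathcal{R}}\int_{0}^{t}\eta(s)\,ds \quad\text{a.e.}
\]
The enhanced Gronwall inequality (Lemma \ref{lem:enhanced}) with $K_1=2\alpha$, $K_2=2\kappa_{\mathcal{R}}$, $K_3\equiv 1$ then produces the explicit nondecreasing bound $r(\cdot)$ of \eqref{eq_rt}, and reinserting $\eta\le r$ into the velocity estimate gives $\|\dot{x}(t)\|\le q(t)$. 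In particular every solution lies in the complete set $\mathcal{K}:=\{y\in C(I;\mathcal{H}): y(0)=x_0,\ \|y(t)\|\le r(t)\ \forall t\}$, on which \ref{Hf}(iii) holds with the single constant $\kappa_f^{R}$, $R:=\max_{t\in I}r(t)$.

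Next I would define and analyze $\mathcal{S}$. For $y\in\mathcal{K}$, the map $h_y$ is integrable (Carathéodory measurability together with the growth bounds), so Proposition \ref{prop:Existence} yields a unique $\mathcal{S}(y)\in\mathrm{AC}(I;\mathcal{H})$ solving $\dot{x}\in -N_{C(t)}(x)+h_y(t)$, $x(0)=x_0$; clearly fixed points of $\mathcal{S}$ are exactly the solutions of \eqref{eq:SP1}. Running the Step-1 estimate for $\mathcal{S}(y)$ with $y\in\mathcal{K}$ gives $\|\mathcal{S}(y)(t)\|\le \|x_0\|+\int_0^t q(s)\,ds\le r(t)$ (the integrated form of the Gronwall bound, using that $r$ is nondecreasing), so $\mathcal{S}(\mathcal{K})\subseteq\mathcal{K}$. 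The crux is to show that $\mathcal{S}$ is history-dependent on $\mathcal{K}$. Taking $y_1,y_2\in\mathcal{K}$, writing $x_i:=\mathcal{S}(y_i)$, $\Theta:=\|x_1-x_2\|$, $\Phi:=\|y_1-y_2\|$ and $\zeta_i:=h_{y_i}-\dot{x}_i\in N^{P}(C(t);x_i)$, one differentiates $\Theta^2$ and uses the prox-regular estimate Proposition \ref{prop:prox}(ii) to bound the normal-cone defect $-\langle \zeta_1-\zeta_2,x_1-x_2\rangle$ by $\tfrac{1}{2\rho}(\|\zeta_1\|+\|\zeta_2\|)\Theta^2$, where $\|\zeta_i(t)\|\le\|h_{y_i}(t)\|+|\dot{v}(t)|$ is dominated uniformly on $\mathcal{K}$ by a fixed integrable $\psi$. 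Controlling $\langle h_{y_1}-h_{y_2},x_1-x_2\rangle$ via \ref{Hf}(iii) and \ref{HR} and integrating (with $\Theta(0)=0$) gives
\[
\Theta(t)^2\le \int_{0}^{t}\tfrac{2}{\rho}\psi(s)\Theta(s)^2\,ds+\int_{0}^{t}2\Big(\kappa_f^{R}\Phi(s)+\kappa_{\mathcal{R}}\!\int_{0}^{s}\!\Phi\Big)\Theta(s)\,ds,
\]
which is precisely the hypothesis of Lemma \ref{lem:cancel} (quadratic coefficient $\tfrac{2}{\rho}\psi$, linear coefficient $2(\kappa_f^{R}\Phi+\kappa_{\mathcal{R}}\int_0^{\cdot}\Phi)$). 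That lemma eliminates the quadratic term, and bounding the exponential factor by $M:=\exp(\tfrac{2}{\rho}\int_0^T\psi)$ together with $\int_0^t\!\int_0^s\Phi\le T\int_0^t\Phi$ yields $\|\mathcal{S}(y_1)(t)-\mathcal{S}(y_2)(t)\|\le 2M(\kappa_f^{R}+\kappa_{\mathcal{R}}T)\int_0^t\|y_1(s)-y_2(s)\|\,ds$.

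Finally, since $\mathcal{K}$ is a complete metric space and $\mathcal{S}\colon\mathcal{K}\to\mathcal{K}$ is history-dependent, the contraction argument underlying Proposition \ref{teo:fixed} (some iterate $\mathcal{S}^n$ is a genuine contraction, with factor $(\kappa T)^n/n!$) gives a unique fixed point $x\in\mathcal{K}$; this is a solution, and global uniqueness follows because Step 1 confines \emph{every} solution to $\mathcal{K}$. The bounds $\|x\|\le r$ and $\|\dot{x}\|\le q$ are those already obtained. I expect the main obstacle to be exactly the history-dependence of $\mathcal{S}$: because $\mathcal{S}$ is only defined implicitly through the sweeping dynamics, one must turn the normal-cone monotonicity defect (tamed by prox-regularity and the velocity bound of Proposition \ref{prop:Existence}) together with the perturbation difference into a \emph{closed quadratic} integral inequality, and then linearize it via Lemma \ref{lem:cancel} so as to recover precisely the $\int_0^t\|y_1-y_2\|$ structure — rather than a pointwise Lipschitz estimate — needed for the fixed-point theorem; confining everything to $\mathcal{K}$ is what legitimizes the use of the local modulus $\kappa_f^{R}$.
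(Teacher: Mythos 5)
Your overall strategy coincides with the paper's: freeze the perturbation as $h_y$, solve the resulting integrably-perturbed sweeping process via Proposition \ref{prop:Existence}, use the hypomonotonicity of the truncated normal cone (Proposition \ref{prop:prox}) together with Lemma \ref{lem:cancel} to turn the quadratic integral inequality for $\|x_1-x_2\|$ into the estimate $\|x_1(t)-x_2(t)\|\leq 2\int_0^t \exp\bigl(\int_s^t \tfrac{2\psi}{\rho}\bigr)\|h_{y_1}(s)-h_{y_2}(s)\|\,ds$, and conclude by the history-dependent fixed-point principle. That part of your argument is correct and is exactly the paper's mechanism.

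The one place where you genuinely deviate is the domain of the fixed-point operator, and that is where there is a gap. You restrict $\mathcal{S}$ to the set $\mathcal{K}=\{y: y(0)=x_0,\ \|y(t)\|\leq r(t)\}$ and justify invariance by the chain $\|\mathcal{S}(y)(t)\|\leq \|x_0\|+\int_0^t q(s)\,ds\leq r(t)$. The second inequality is not established and in general fails for the $r$ of \eqref{eq_rt}: since $q(s)=\varepsilon(s)+2\alpha(s)r(s)+2\kappa_{\mathcal{R}}\int_0^s r(\tau)\,d\tau$ while $\dot r(s)=\varepsilon(s)+2(\alpha(s)+\kappa_{\mathcal{R}})r(s)$, the claim reduces to $\int_0^s r(\tau)\,d\tau\leq r(s)$, which is false for $s>1$ (e.g.\ $\alpha=\varepsilon=0$, $\kappa_{\mathcal{R}}$ small, $r(s)=\|x_0\|e^{2\kappa_{\mathcal{R}}s}$). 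So $\mathcal{S}(\mathcal{K})\subseteq\mathcal{K}$ is unproven, and without it the Picard iteration on $\mathcal{K}$ does not get off the ground. The paper avoids needing any invariant set by truncating the \emph{input} rather than restricting the domain: it sets $h_y(t):=f(t,\tilde y(t))+\mathcal{R}(\tilde y)(t)$ with $\tilde y(t):=\operatorname{proj}_{r(t)\B}(y(t))$, so that $\mathcal{F}$ is well defined and history-dependent on all of $C(I;\mathcal{H})$ (the $1$-Lipschitz projection lets the local constant $\kappa_f^{r(T)}$ act globally), obtains the unique fixed point from Proposition \ref{teo:fixed}, and only \emph{a posteriori} runs the Gronwall estimate on that fixed point to show $\|x(t)\|\leq r(t)$, whence $\tilde x=x$ and $x$ solves \eqref{eq:SP1}. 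Your Step 1 a priori bound does give uniqueness among solutions of \eqref{eq:SP1}, but existence requires either adopting the truncation device or replacing $r$ by the exact solution of the comparison equation $\dot R=\varepsilon+2\alpha R+2\kappa_{\mathcal{R}}\int_0^{\cdot}R$, for which $\|x_0\|+\int_0^t q=R(t)$ holds with equality and invariance is restored.
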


\begin{proof} 
    Given $y\in C(I;\mathcal{H})$, let us consider the function 
    $$
        h_y(t):=f(t,\tilde{y}(t)) + \mathcal{R}(\tilde{y})(t), 
    $$
    where $\tilde{y}(t) := \operatorname{proj}_{r(t)\mathbb{B}}(y(t))$. By virtue of \ref{Hf} and \ref{HR}, for $t\in I$, one has
    \begin{equation*}
        \begin{aligned}
        \Vert h_y(t)\Vert &\leq \alpha(t)\Vert \tilde{y}(t)\Vert +\beta(t)+\Vert \mathcal{R}(\tilde{y})(t)-\mathcal{R}(0)(t)\Vert +\Vert \mathcal{R}(0)(t)\Vert \\
        &\leq  \alpha(t) r(t) +\beta(t)+\kappa_{\mathcal{R}}\int_0^t r(s) ds +\Vert \mathcal{R}(0)(t)\Vert,
        \end{aligned} 
    \end{equation*}
    which shows that the map $t\mapsto h_y(t)$ is integrable. Hence, according to Proposition \ref{prop:Existence}, the differential inclusion:     
    \begin{equation}
        \label{eq:relevo}
        \left\{
        \begin{aligned}
            \dot{x}(t) &\in -N_{C(t)}(x(t)) + h_y(t)  & \textrm{ for a.e. }t\in I,\\
            x(0)&= x_{0} \in C(0),
        \end{aligned}
        \right.
    \end{equation}
    admits  a unique solution $x(\cdot)$, which is absolutely continuous and 
    $$
        \Vert \dot{x}(t)-h_y(t)\Vert \leq \vert \dot{v}(t)\vert +\Vert h_y(t)\Vert \quad  \textrm{ a.e. } t\in I,
    $$ 
    where $r(\cdot)$ is defined in formula \eqref{eq_rt}. Therefore, for a.e. $t\in I$, one has
    $$
        \Vert \dot{x}(t)-h_y(t)\Vert \leq \psi(t):=\vert \dot{v}(t)\vert+\alpha(t)r(t)+\beta(t)+\kappa_{\mathcal{R}}\int_0^t r(s)ds+\Vert \mathcal{R}(0)(t)\Vert.
    $$   
    Hence, the operator $\mathcal{F} \colon C(I;\mathcal{H}) \to C(I;\mathcal{H})$ defined as $\mathcal{F}(y):=x_y$ is a well-defined single-valued function.   We proceed to prove that the operator $\mathcal{F}$ is history-dependent. Indeed, let $y_1, y_2\in C(I;\mathcal{H})$ and set $x_i=\mathcal{F}(y_{i})$ for $i=1,2$. Hence, according to \eqref{eq:relevo}, for a.e. $t\in I$, 
    \begin{equation*}
        \frac{ \dot{x}_{i}(t) - h_{y_{i}}(t) }{ \psi(t) } \in -N_{C(t)}(x_{i}(t)) \cap  \B  \quad\textrm{ for } i=1,2,
    \end{equation*}
    which, by the $\rho-$uniformly prox-regularity of the sets $C(t)$ and assertion $(iii)$ of Proposition \ref{prop:prox}, implies that for a.e.  $t\in I$, one has
    \begin{equation*}
        \langle \dot{x}_{1}(t) - \dot{x}_{2}(t) -[h_{y_{1}}(t) - h_{y_{2}}(t)], x_{1}(t) - x_{2}(t) \rangle \leq \frac{\psi(t)}{\rho} \|x_{1}(t) - x_{2}(t)\|^{2}.
    \end{equation*}
    Therefore,  for a.e. $t\in I$, 
    \begin{equation*}
        \frac{1}{2}\frac{d}{dt}\Vert x_1(t)-x_2(t)\Vert^2\leq  \| h_{y_{1}}(t) - h_{y_{2}}(t) \| \cdot \| x_{1}(t) - x_{2}(t) \| + \frac{\psi(t)}{\rho} \| x_{1}(t) - x_{2}(t) \|^{2}.
    \end{equation*}
    Then, by integrating the above inequality and using Lemma \ref{lem:cancel}, we obtain that for all $t\in I$, one has
    \begin{equation}
        \label{cota-corta}
        \Vert x_1(t)-x_2(t)\Vert \leq 2\int_{0}^t \exp\left(2\int_s^t \frac{\psi(\tau)}{\rho}d\tau\right) \Vert h_{y_{1}}(s) - h_{y_{2}}(s)\Vert ds.
    \end{equation}
    Now, we proceed to estimate the term  $ \Vert h_{y_{1}}(t) - h_{y_{2}}(t)\Vert $. Indeed, by virtue of \ref{Hf} and \ref{HR}, we obtain that for a.e. $t\in I$, one has
    \begin{equation*}
        \begin{aligned}
            \|h_{y_{1}}(t) - h_{y_{2}}(t)\| 
            & \leq 
            \| f(t,\tilde{y}_{1}(t)) - f(t,\tilde{y}_{1}(t))\| + \|\mathcal{R}(\tilde{y}_{1})(t) - \mathcal{R}(\tilde{y}_{2})(t)\| \\
            & \leq 
            \kappa_{f}^{r(T)} \|\tilde{y}_{1}(t)-\tilde{y}_{2}(t)\| + \kappa_{\mathcal{R}}\int_{0}^{t} \|\tilde{y}_{1}(t) - \tilde{y}_{2}(t)\|ds \\
            & \leq 
            \kappa_{f}^{r(T)} \|y_{1}(t) - y_{2}(t)\|+ \kappa_{\mathcal{R}}\int_{0}^{t} \|y_{1}(s) - y_{2}(s)\|ds,
        \end{aligned}
    \end{equation*}
    where we have used that the map $x\mapsto \operatorname{proj}_{r(T)\mathbb{B}}(x)$ is Lipschitz of constant 1. \newline  Hence, from the previous calculation and inequality \eqref{cota-corta}, we deduce that for all $t\in I$, one has
    \begin{equation*}
        \begin{aligned}
            \Vert x_1(t)-x_2(t)\Vert \leq\, & 2\kappa_{f}^{r(T)}\int_{0}^t \exp\left(2\int_s^t \frac{\psi(\tau)}{\rho}d\tau\right) \Vert y_{1}(s) - y_{2}(s)\Vert ds\\
            &+2\kappa_{\mathcal{R}}\int_{0}^t \exp\left(2\int_s^t \frac{\psi(\tau)}{\rho}d\tau\right) \int_0^s  \Vert y_{1}(\tau) - y_{2}(\tau)\Vert d\tau ds,
        \end{aligned}
    \end{equation*}
    from which we deduce that $\mathcal{F}$ is a state-dependent operator. Therefore, by virtue of Proposition \ref{teo:fixed}, the history-dependent operator $\mathcal{F}$ has a unique fixed point $x(\cdot)$, which solves the differential inclusion:
    \begin{equation}
        \label{eq:relevo2}
        \left\{
        \begin{aligned}
            \dot{x}(t) &\in -N_{C(t)}(x(t)) +h_x(t)  & \textrm{ for a.e. }t\in I,\\
            x(0)&= x_{0} \in C(0),
        \end{aligned}
        \right.
    \end{equation}
    where $h_x(t):=  f(t,\tilde{x}(t))+\mathcal{R}(\tilde{x}(t))$ and $\tilde{x}(t):=\operatorname{proj}_{r(t)\mathbb{B}}(x(t))$. \newline  To finish the proof, it remains to show that
    $$
    \|x(t)\| \leq r(t) \textrm{ for all } t\in I \textrm{ and } \|\dot{x}(t)\| \leq q(t) \textrm{ for a.e. } t\in I.
    $$
    Indeed, by virtue of \ref{Hf} and \ref{HR}, for all $t\in I$, one has
    \begin{equation*}
        \begin{aligned}
            \Vert x(t)\Vert &\leq \Vert x_0\Vert +\int_0^t \Vert \dot{x}(s)\Vert ds\\
            &\leq \Vert x_0\Vert +\int_0^t (\vert \dot{v}(s)\vert + \Vert h_{x}(s)\Vert)ds+\int_0^t \Vert h_{x}(s)\Vert ds\\
            &\leq \Vert x_0\Vert +\int_0^t \vert \dot{v}(s)\vert ds +2\int_0^t  \Vert f(s,\tilde{x}(s))\Vert  ds+2\int_0^t  \Vert \mathcal{R}(\tilde{x})(s)\Vert  ds\\
            &\leq \Vert x_0\Vert +\int_0^t \vert \dot{v}(s)\vert ds+2\int_0^t  \Vert  \mathcal{R}(0)(s)\Vert  ds+2\int_0^t  \Vert f(s,\tilde{x}(s))\Vert  ds\\&+2\int_0^t  \Vert \mathcal{R}(\tilde{x})(s)- \mathcal{R}(0)(s)\Vert  ds\\
            &\leq \Vert x_0\Vert +\int_0^t \vert \dot{v}(s)\vert ds+2\int_0^t  \Vert  \mathcal{R}(0)(s)\Vert  ds+2\int_0^t \beta(s)ds\\
            &+2\int_0^t \alpha(s)\Vert \tilde{x}(s)\Vert ds+2\kappa_{\mathcal{R}}\int_0^t  \int_0^s  \Vert \tilde{x}(\tau)\Vert d\tau  ds\\
            &\leq \Vert x_0\Vert +\int_0^t \vert \dot{v}(s)\vert ds+2\int_0^t  \Vert  \mathcal{R}(0)(s)\Vert  ds+2\int_0^t \beta(s)ds\\
            &+2\int_0^t \alpha(s)\Vert {x}(s)\Vert ds+2\kappa_{\mathcal{R}}\int_0^t  \int_0^s  \Vert {x}(\tau)\Vert d\tau  ds,
        \end{aligned}
    \end{equation*}
    where we have used that the map $x\mapsto \operatorname{proj}_{r(t)\mathbb{B}}(x)$ is Lipschitz of constant 1. Therefore, for all $t\in I$, one has
    \begin{equation*}
        \begin{aligned}
            \Vert x(t)\Vert \leq \Vert x_0\Vert +\int_0^t \varepsilon(s)ds+2\int_0^t \alpha(s)\Vert x(s)\Vert ds+2\kappa_{\mathcal{R}}\int_0^t \int_0^s \Vert x(\tau)\Vert d\tau ds,
        \end{aligned}
    \end{equation*}
    where $\varepsilon(t):=\vert \dot{v}(t)\vert +2\Vert \mathcal{R}(0)(t)\Vert +2\beta(t)$. Hence, by virtue of Lemma \ref{lem:enhanced}, we obtain that, for all $t\in I$, one has {\small
    \begin{equation*}
        \Vert x(t)\Vert \leq r(t):=\Vert x_0\Vert \exp\left(2\int_0^t (\alpha(s)+\kappa_{\mathcal{R}})ds \right)+\int_0^t \varepsilon(s)\exp\left(2\int_s^t (\alpha(\tau)+\kappa_{\mathcal{R}})d\tau \right)ds,
    \end{equation*}}
    which implies that $x(\cdot)$ solves the history-dependent sweeping process \eqref{eq:SP1}.  Finally,  similarly to the above calculations, for a.e. $t\in I$, one has
    \begin{equation*}
        \begin{aligned}
            \Vert \dot{x}(t)\Vert &\leq \Vert \dot{x}(t)-h_{x}(t)\Vert +\Vert h_{x}(t)\Vert \\
            &\leq \vert \dot{v}(t)\vert +2 \Vert h_{x}(t)\Vert \\
            &\leq \vert \dot{v}(t)\vert + 2\Vert \mathcal{R}(0)(s)\Vert +2\alpha(t)\Vert x(t)\Vert+2\beta(t)+2\kappa_{\mathcal{R}}\int_0^t \Vert x(s)\Vert ds\\
            &\leq q(t):=\vert \dot{v}(t)\vert + 2\Vert \mathcal{R}(0)(s)\Vert +2\alpha(t)r(t)+2\beta(t)+2\kappa_{\mathcal{R}}\int_0^t r(s) ds,
        \end{aligned}
    \end{equation*}
    which ends the proof.
\end{proof}

\section{State-Dependent Sweeping Process}\label{section-5}

In this section, we consider the following state-dependent history-dependent sweeping process:
\begin{equation}
    \label{StateSP}
    \left\{
    \begin{aligned}
                \dot{x}(t)  &\in -N_{C(t,x(t))}(x(t)) + f(t,x(t)) + \mathcal{R}(x)(t) \quad\textrm{ for a.e. }t\in[0,T], \\
                x(0)&= x_{0} \in C(0,x_0),
        \end{aligned}
    \right.
\end{equation}
where $C$, $f$ and $\mathcal{R}$ satisfy \ref{HCx}, \ref{Hf} and \ref{HR}, respectively.  By means of the Schauder Fixed Point Theorem, we prove the existence of solutions for the above differential inclusion.

\begin{theorem}\label{teo:Existence2}
    Assume that \ref{HCx}, \ref{Hf} and \ref{HR} hold. Then, for any $x_0\in C(0,x_0)$, there exists at least one absolutely continuous solution $x(\cdot)$ for the problem \eqref{StateSP}. Moreover, the following bound holds:
       \begin{equation*}
       \begin{aligned}
        \|x(t)\|        &\leq r(t)+L\int_0^t \psi(s)\exp\left( 2\int_s^t (\alpha(\tau)+\kappa_{\mathcal{R}})d\tau \right)ds &\textrm{ for all }t\in I,\\
        \|\dot{x}(t)\|  &\leq \psi(t)  &\textrm{ for a.e. }t\in I,
        \end{aligned}
    \end{equation*}
    where $r(t)$ is defined in formula \eqref{eq_rt} and 
    \begin{equation*}
        \begin{aligned}
            \psi(t):=\frac{q(t)}{1-L}+\frac{2L}{1-L}(\alpha(t)+\kappa_{\mathcal{R}})\int_0^t \exp\left(2\int_t^s (\alpha(\tau)+\kappa_{\mathcal{R}})d\tau\right)ds
        \end{aligned}
    \end{equation*}  
\end{theorem}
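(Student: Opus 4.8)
The plan is to reduce the state-dependent problem \eqref{StateSP} to the state-independent theory of Theorem \ref{teo:Existence1} by freezing the state inside the moving set, and then to realize a solution as a fixed point of the resulting solution operator by means of the Schauder Fixed Point Theorem. Concretely, I would work in the set $\mathcal{K}$ of absolutely continuous functions $u\colon I\to\mathcal{H}$ with $u(0)=x_0$ and $\|\dot u(t)\|\le\psi(t)$ for a.e.\ $t\in I$, which is nonempty, convex, closed in $C(I;\mathcal{H})$ and bounded. Given $u\in\mathcal{K}$, the frozen multifunction $t\mapsto C(t,u(t))$ inherits from \ref{HCx}(i) the estimate $\operatorname{Haus}(C(t,u(t)),C(s,u(s)))\le |v(t)-v(s)|+L\|u(t)-u(s)\|\le |w_u(t)-w_u(s)|$, where $w_u$ is the absolutely continuous function with $\dot w_u=|\dot v|+L\|\dot u\|$; hence it satisfies \ref{HC}. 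Therefore Theorem \ref{teo:Existence1} applies (note $x_0\in C(0,u(0))=C(0,x_0)$) and produces a unique solution of the frozen sweeping process, which I denote $x=\Phi(u)$.

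\emph{Self-map property.} From the a priori bounds of Theorem \ref{teo:Existence1} applied to the frozen problem, in which $|\dot v|$ is replaced by $\dot w_u=|\dot v|+L\|\dot u\|$, one obtains $\|x(t)\|\le \bar r(t):=r(t)+L\int_0^t\psi(s)\exp(2\int_s^t(\alpha(\tau)+\kappa_{\mathcal{R}})d\tau)\,ds$ together with a derivative bound of the form $\|\dot x(t)\|\le |\dot v(t)|+L\psi(t)+2\alpha(t)\bar r(t)+2\beta(t)+2\kappa_{\mathcal{R}}\int_0^t\bar r(s)\,ds+2\|\mathcal{R}(0)(t)\|$. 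Requiring the right-hand side to be dominated by $\psi(t)$ gives a linear integral inequality for $\psi$ whose solution, obtained through the enhanced Gronwall Lemma \ref{lem:enhanced}, is exactly the function $\psi$ displayed in the statement; with this choice $\Phi$ maps $\mathcal{K}$ into itself and the announced bounds on $\|x(t)\|$ and $\|\dot x(t)\|$ hold.

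\emph{Compactness of $\Phi(\mathcal{K})$.} The uniform integrable bound $\|\dot x(t)\|\le\psi(t)$ makes $\Phi(\mathcal{K})$ equicontinuous (indeed equi-absolutely-continuous). For fixed $t$, every $x=\Phi(u)$ satisfies $x(t)\in C(t,u(t))$ with $u(t)\in A_t:=\{u(t):u\in\mathcal{K}\}$ bounded and $\|x(t)\|\le \bar r(T)=:R$; hence $\{\Phi(u)(t):u\in\mathcal{K}\}\subset C(t,A_t)\cap R\mathbb{B}$, which is relatively compact by \ref{HCx}(ii). Arzelà--Ascoli then yields that $\Phi(\mathcal{K})$ is relatively compact in $C(I;\mathcal{H})$; alternatively one may track $t\mapsto\beta(\{\Phi(u)(t)\})$ using the measures of non-compactness of Proposition \ref{prop:PropertiesAboutNonCompactnessMeasures} to reach the same conclusion.

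\emph{Continuity of $\Phi$ and conclusion.} This is the delicate step and the main obstacle. Given $u_1,u_2\in\mathcal{K}$ and $x_i=\Phi(u_i)$, I must compare two sweeping processes driven by the \emph{different} sets $C(\cdot,u_1(\cdot))$ and $C(\cdot,u_2(\cdot))$, so the hypomonotonicity estimate of Proposition \ref{prop:prox}(iii) cannot be applied directly, since $x_1(t)$ and $x_2(t)$ lie in different sets. To handle this cross-set comparison I would project $x_2(t)$ onto $C(t,u_1(t))$ (and symmetrically $x_1(t)$ onto $C(t,u_2(t))$), invoke the proximal inequality of Proposition \ref{prop:prox}(ii) together with $d_{C(t,u_1(t))}(x_2(t))\le\operatorname{Haus}(C(t,u_2(t)),C(t,u_1(t)))\le L\|u_1(t)-u_2(t)\|$, and control the perturbation gap by $\|h_1(t)-h_2(t)\|\le \kappa_f^{R}\|x_1(t)-x_2(t)\|+\kappa_{\mathcal{R}}\int_0^t\|x_1(s)-x_2(s)\|\,ds$ via \ref{Hf} and \ref{HR}. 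This produces a differential inequality for $\|x_1-x_2\|^2$ whose non-homogeneous part is proportional to $L\|u_1-u_2\|_\infty$; integrating and applying Lemma \ref{lem:cancel} (or Lemma \ref{lem:enhanced}) gives $\|\Phi(u_1)-\Phi(u_2)\|_\infty\le CL\|u_1-u_2\|_\infty$ for a constant $C$ depending on $\rho$, $T$ and $\psi$. In particular $\Phi$ is continuous (the constant $CL$ need not be below $1$, which is precisely why Schauder rather than Banach is required). Having checked that $\mathcal{K}$ is nonempty, convex and closed, that $\Phi(\mathcal{K})\subset\mathcal{K}$, that $\Phi$ is continuous and that $\Phi(\mathcal{K})$ is relatively compact, the Schauder Fixed Point Theorem furnishes $x\in\mathcal{K}$ with $\Phi(x)=x$; this $x$ solves \eqref{StateSP} and satisfies the stated bounds.
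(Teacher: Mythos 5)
Your proposal follows essentially the same route as the paper: freeze the state in the moving set, check that $t\mapsto C(t,u(t))$ satisfies \ref{HC} for $u\in\mathcal{K}$, solve the frozen problem via Theorem \ref{teo:Existence1}, verify that the resulting operator maps $\mathcal{K}$ into itself (this is how $\psi$ is calibrated), establish continuity by the cross-set hypomonotonicity comparison and relative compactness from \ref{HCx}(ii) plus Arzel\`a--Ascoli, and conclude by Schauder. One quantitative correction to your continuity step: the non-homogeneous term produced by the Hausdorff comparison enters the differential inequality for $\|x_1-x_2\|^2$ as $L\,m(t)\|u_1(t)-u_2(t)\|$ \emph{without} a factor of $\|x_1(t)-x_2(t)\|$, so Gronwall yields $\|\Phi(u_1)-\Phi(u_2)\|_\infty\lesssim\sqrt{L\|u_1-u_2\|_\infty}$ (a square-root modulus) rather than the Lipschitz bound $CL\|u_1-u_2\|_\infty$ you state; the paper sidesteps this by only proving sequential continuity through $\sigma(t):=\limsup_n\|x_n(t)-x(t)\|$, and since continuity is all Schauder requires, your argument still goes through.
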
   

\begin{proof} 
    We say that $y\in \mathcal{K}$ if there exists $f\in L^1(I;\mathcal{H})$ such that $\Vert f(t)\Vert \leq \psi(t)$ for a.e. $t\in I$ and 
    $$
        y(t):=x_0+\int_0^t f(s)ds \textrm{ for all } t\in I.
    $$
    It is clear that the set $\mathcal{K}$ seen as a subset of $C(I;\mathcal{H})$ is nonempty, closed and convex. We observe that for a given $y\in \mathcal{K}$ the set-valued map $t\mapsto C(t,y(t))$ satisfies assumption \ref{HC}. Indeed, according to \ref{HCx}, for any $t,s\in I$ with $s\leq t$, one has
    \begin{equation*}
        \begin{aligned}
            \operatorname{Haus}(C(t,y(t)),C(s,y(s)))
            &\leq \vert v(t)-v(s)\vert +L\Vert y(t)-y(s)\Vert \\
            &\leq \vert v(t)-v(s)\vert +L\int_s^t \psi(\tau) d\tau.
        \end{aligned}
    \end{equation*}
    Therefore, by virtue of Theorem \ref{teo:Existence1}, for any $y\in \mathcal{K}$, there exists a unique absolutely continuous solution $x(\cdot)$ to the following differential inclusion:
    \begin{equation}\label{StateSP2}
        \left\{
        \begin{aligned}
                \dot{x}(t)  &\in -N_{C(t,y(t))}(x(t)) + f(t,x(t)) + \mathcal{R}(x)(t) \quad\textrm{ for a.e. }t\in I, \\
                x(0)&= x_{0}.
            \end{aligned}
        \right.
    \end{equation}
    Moreover, the following bounds hold: {\small
    \begin{equation*}
        \begin{aligned}
            \Vert x(t)\Vert 
            &\leq r_0(t):=r(t)+L\int_0^t \psi(s)\exp\left(2\int_s^t (\alpha(\tau)+\kappa_{\mathcal{R}})d\tau\right)ds  & \textrm{ for all } t\in I,\\
            \Vert \dot{x}(t)\Vert 
            &\leq q_0(t):=q(t)+L\psi(t)+2\alpha(t)L\exp\left(2\int_0^t  (\alpha(\tau)+\kappa_{\mathcal{R}})d\tau\right)\nu(t)\\
            &+2\kappa_{\mathcal{R}}L\int_0^t\exp\left(2\int_0^t  (\alpha(\tau)+\kappa_{\mathcal{R}})d\tau\right)\nu(s)ds  & \textrm{ a.e. } t\in I,
        \end{aligned}
    \end{equation*}}
    where 
    $$
            \nu(t):=\int_0^t \psi(s)\exp\left(2\int_0^s  (\alpha(\tau)+\kappa_{\mathcal{R}})d\tau\right)ds.
    $$ 
    Hence, we can consider the operator $\mathcal{F}\colon \mathcal{K} \to C(I;\mathcal{H})$ which assigns to each $y\in \mathcal{K}$ the unique solution $x(\cdot):=\mathcal{F}(y)$ of the problem \eqref{StateSP2}. Moreover, for a.e. $t\in I$, one has
    \begin{equation*}
        \begin{aligned}
            \Vert \dot{x}(t)\Vert  
            &\leq q(t)+L\psi(t)+2\alpha(t)L\exp\left(2\int_0^t  (\alpha(\tau)+\kappa_{\mathcal{R}})d\tau\right)\nu(t)\\
            &+2\kappa_{\mathcal{R}}L\int_0^t\exp\left(2\int_0^t  (\alpha(\tau)+\kappa_{\mathcal{R}})d\tau\right)\nu(s)ds= \psi(t),
        \end{aligned}
    \end{equation*}
    which proves that the operator $\mathcal{F}$ takes values in $\mathcal{K}$. 
    \begin{claiminproof}{1}
        The operator $\mathcal{F}\colon \mathcal{K} \to \mathcal{K}$ is continuous. 
    \end{claiminproof}
    \begin{claimproof}{1}
        Let us consider a sequence $(y_{n})\subset \mathcal{K}$ converging to $y\in \mathcal{K}$. Set $x_n:=\mathcal{F}(y_n)$ and $x:=\mathcal{F}(y)$. By using \eqref{StateSP2}, we obtain that
        \begin{equation*}
            \begin{aligned}
                \dot{x}_n(t)&\in -N_{C(t,y_n(t))}(x_n(t))+h_n(t) & \textrm{ a.e. } t\in I,\\
                \dot{x}(t)&\in -N_{C(t,y(t))}(x(t))+h(t) & \textrm{ a.e. } t\in I,
            \end{aligned}
        \end{equation*}
        where $h_n(t):=f(t,x_n(t))+\mathcal{R}(x_n)(t)$ and $h(t):=f(t,x(t))+\mathcal{R}(x)(t)$. Hence, by virtue of Proposition \ref{prop:Existence} and assumptions \ref{Hf} and \ref{HR}, we obtain that, for a.e. $t\in I$, one has
        \begin{equation*}
            \begin{aligned}
                \Vert \dot{x}_n(t)-h_n(t)\Vert &\leq \vert \dot{v}(t)\vert +L\psi(t)+\Vert h_n(t)\Vert \\
                &\leq m(t):=\vert \dot{v}(t)\vert +L\psi(t)+\alpha(t)r_0(t)+\beta(t)\\&+\kappa_{\mathcal{R}}\int_0^t r_0(s)ds+\Vert \mathcal{R}(0)(t)\Vert,
            \end{aligned}
        \end{equation*}
        which implies that for all $n\in \mathbb{N}$, one has
        \begin{equation*}
            \begin{aligned}
                \frac{\dot{x}_n(t)-h_n(t)}{m(t)}& \in -N_{C(t,y_n(t))}(x_n(t))\cap \mathbb{B}  & \textrm{ a.e. } t\in I.
            \end{aligned}
        \end{equation*}
        Similarly, 
        \begin{equation*}
            \begin{aligned}
                \frac{\dot{x}(t)-h(t)}{m(t)}& \in -N_{C(t,y(t))}(x(t))\cap \mathbb{B}  & \textrm{ a.e. } t\in I.
            \end{aligned}
        \end{equation*}
        Hence, By virtue of Proposition \ref{prop:prox}, we get that for all $c\in C(t,y_n(t))$ 
        \begin{equation}\label{desigualdad1}
            \begin{aligned}
                \langle -\dot{x}_n(t)+h_n(t),c-x_n(t)\rangle &\leq \frac{m(t)}{2\rho}\Vert c-x_n(t)\Vert^2
            \end{aligned}
        \end{equation}
        Similarly, for all $c\in C(t,y(t))$
        \begin{equation}\label{desigualdad2}
             \langle -\dot{x}(t)+h(t),c-x(t)\rangle \leq \frac{m(t)}{2\rho}\Vert c-x(t)\Vert^2. 
        \end{equation}
        Next, according to \ref{HCx}, for all $t\in I$, we have
        \begin{equation*}
            \begin{aligned}
                C(t,y_n(t))\subset C(t,y(t))+L\Vert y(t)-y_n(t)\Vert \mathbb{B},\\
                C(t,y(t))\subset C(t,y_n(t))+L\Vert y(t)-y_n(t)\Vert \mathbb{B},
            \end{aligned}
        \end{equation*}
        which implies the existence of $d_n(t), \tilde{d}_n(t)\in \mathbb{B}$ such that
        \begin{equation*}
            \begin{aligned}
                x(t)-L\Vert y_n(t)-y(t)\Vert \tilde{d}_n(t)&\in C(t,y_n(t)),\\
                x_n(t)-L\Vert y_n(t)-y(t)\Vert d_n(t)&\in C(t,y(t)).
            \end{aligned}
        \end{equation*}
        Therefore, by using the above quantities in \eqref{desigualdad1} and \eqref{desigualdad2}, respectively,  we obtain that for a.e. $t\in I$ 
        \begin{equation*}
            \begin{aligned}
                \langle -\dot{x}_n(t)+h_n(t),x(t)-x_n(t)\rangle &\leq Lm(t)\Vert y_n(t)-y(t)\Vert +\frac{m(t)}{\rho}\Vert  x(t)-x_n(t)\Vert^2\\
                &+\frac{m(t)L^2}{\rho}\Vert  y(t)-y_n(t)\Vert^2\\
                \langle -\dot{x}(t)+h(t),x_n(t)-x(t)\rangle &\leq  Lm(t)\Vert y_n(t)-y(t)\Vert+ \frac{m(t)}{\rho}\Vert x_n(t)-x(t)\Vert^2\\
                &+\frac{m(t)L^2}{\rho}\Vert  y(t)-y_n(t)\Vert^2.
            \end{aligned}
        \end{equation*}
        Therefore, for a.e. $t\in I$,
        \begin{equation*}
            \begin{aligned}
                \frac{1}{2}\frac{d}{dt}\Vert x_n(t)-x(t)\Vert^2 &=\langle \dot{x}_n(t)-\dot{x}(t),x_n(t)-x(t)\rangle\\
                &\leq 2Lm(t)\Vert y_n(t)-y(t)\Vert+2\frac{m(t)L^2}{\rho}\Vert y_n(t)-y(t)\Vert^2\\
                &+2\frac{m(t)}{\rho}\Vert x_n(t)-x(t)\Vert^2+\langle h_n(t)-h(t),x_n(t)-x(t)\rangle.
            \end{aligned}
        \end{equation*}
        Moreover, by using \ref{Hf} and \ref{HR}, we obtain that
        {\small 
        \begin{equation*}
            \begin{aligned}
                \langle h_n(t)-h(t),x_n(t)-x(t)\rangle &\leq \Vert h_n(t)-h(t)\Vert \cdot \Vert x_n(t)-x(t)\Vert \\
                &\leq k_f^{R}\Vert x_n(t)-x(t)\Vert^2+\kappa_{\mathcal{R}}\Vert x_n(t)-x(t)\Vert \int_0^t \Vert x_n(s)-x(s)\Vert ds,
            \end{aligned}
        \end{equation*}}
        where $R:=\sup_{t\in I} r_0(t)$ and $k_f^{R}$ is the constant given by \ref{Hf}. Therefore, for all $t\in I$, one has
        {\small 
        \begin{equation*}
            \begin{aligned}
                \Vert x_n(t)-x(t)\Vert^2 &\leq 4L\int_0^t m(s)\Vert y_n(s)-y(s)\Vert ds +\frac{4L^2}{\rho}\int_0^t m(s)\Vert y_n(s)-y(s)\Vert^2ds\\
                &+\int_0^t \left(4\frac{m(s)}{\rho}+2\kappa_f^R\right)\Vert x_n(s)-x(s)\Vert^2 ds+\kappa_{\mathcal{R}}\left(\int_0^t \Vert x_n(s)-x(s)\Vert ds\right)^2.
            \end{aligned}
        \end{equation*}}
        Set $\sigma(t):=\limsup \Vert x_n(t)-x(t)\Vert$. By taking limit in the above inequality, we obtain that for all $t\in I$
        \begin{equation*}
            \begin{aligned}
                \sigma^2(t)&\leq \int_0^t \left(4\frac{m(s)}{\rho}+2\kappa_f^R\right)\sigma^2(s) ds+\kappa_{\mathcal{R}}\left(\int_0^t \sigma(s) ds\right)^2\\
                &\leq  \int_0^t \left(4\frac{m(s)}{\rho}+2\kappa_f^R\right)\sigma^2(s) ds+\kappa_{\mathcal{R}} t \int_0^t \sigma^2(s) ds,
            \end{aligned}
        \end{equation*}
        where we have used Holder's inequality. Finally, from the classical Gronwall's inequality, we conclude that $\sigma \equiv 0$, which proves the continuity of $\mathcal{F}$.
    \end{claimproof}
    To apply Schauder's Fixed Point Theorem, it remains to show that $\mathcal{F}(\mathcal{K})$ is relatively compact in $C(I;\mathcal{H})$.
    \begin{claiminproof}{2}
        The set $\mathcal{F}(\mathcal{K})$ relatively compact in $C(I;\mathcal{H})$.
    \end{claiminproof}
    \begin{claimproof}{2} 
        On the one hand, since $\mathcal{K}$ is bounded in $W^{1,1}(I;\mathcal{H})$ by definition. It is clear that $\mathcal{F}(\mathcal{K})$ is equicontinuous. On the other hand, fix $t\in I$ and define 
        \begin{equation*}
        \mathcal{K}(t):=\{ y(t)\colon y\in \mathcal{K}\} \textrm{ and }   \mathcal{F}(\mathcal{K})(t):=\{\mathcal{F}(y)(t)\colon y\in \mathcal{K}\}.
        \end{equation*}
        We can observe that $\mathcal{F}(\mathcal{K})(t)\subset C(t,\mathcal{K}(t))\cap r_0(t)\mathbb{B}$. Hence, by using the notion measure of non-compactness, we obtain that
        \begin{equation*}
        \gamma(\mathcal{F}(\mathcal{K})(t))\leq \gamma\left(C(t,\mathcal{K}(t))\cap r_0(t)\mathbb{B}\right)=0,
        \end{equation*}
        where we have used assumption \ref{HCx} and the monotony of $\gamma$. Therefore, the set $\mathcal{F}(\mathcal{K})(t)$ is relatively compact in $\mathcal{H}$. Finally, by virtue of the Arzel\`a-Ascoli Theorem, we conclude that the set  $\mathcal{F}(\mathcal{K})$ is relatively compact. 
    \end{claimproof} \newline
    Finally, by Schauder's Fixed Point Theorem, the operator $\mathcal{F}$ has a fixed point, which is a solution of \eqref{StateSP}.
\end{proof}

\section{Volterra Sweeping Processes}\label{section-6}
In this section, we show that our main results (Theorems \ref{teo:Existence1} and \ref{teo:Existence2}) enables us to obtain the existence of solutions for the Volterra Sweeping Process: 
\begin{equation}\label{Volterra1}
    \left\{
    \begin{aligned}
                \dot{x}(t) &\in -N_{C(t)}(x(t)) + f(t,x(t)) + \int_0^t g(t,s,x(s))ds \textrm{ for a.e. }t\in I, \\
                x(0)&= x_{0} \in C(0),
    \end{aligned}
    \right.
\end{equation}
and the State-Dependent Volterra Sweeping Process: {\small
\begin{equation}\label{Volterra2}
    \left\{
    \begin{aligned}
                \dot{x}(t) &\in -N_{C(t,x(t))}(x(t)) + f(t,x(t)) + \int_0^t g(t,s,x(s))ds \textrm{ for a.e. }t\in I, \\
                x(0)&= x_{0} \in C(0,x_0).
    \end{aligned}
    \right.
\end{equation}}
\noindent Here we assume that $f$ and $g$ satisfy  \ref{Hf} and \ref{Hg}, respectively.  Hence, to address the above dynamical systems, we can consider the operator 
$$
\mathcal{R}(x)(t):=\int_0^t f(t,s,x(s))ds.
$$
Unfortunately, this operator does not satisfy Assumption \ref{HR}; hence, we cannot directly apply Theorems \ref{teo:Existence1} and \ref{teo:Existence2}, respectively. Nevertheless, by using the reparametrization technique developed in \cite{Vilches-2024}, we can, without loss of generality, assume that $\mathcal{R}$ satisfies Assumption \ref{HR}. Hence, on the one hand, the existence result for the Volterra Sweeping Process is the following:
\begin{theorem}\label{teo:Volterra1}
    Assume that  \ref{HC}, \ref{Hf} and \ref{Hg} hold. Then, for any initial condition $x_0\in C(0)$ there exists a unique absolutely continuous solution $x(\cdot)$ for  the problem \eqref{Volterra1}. 
\end{theorem}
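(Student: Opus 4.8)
The plan is to read the integral perturbation in \eqref{Volterra1} as a single operator and then reduce the whole problem to the already-established Theorem \ref{teo:Existence1}. Concretely, set
$$\mathcal{R}(x)(t) := \int_0^t g(t,s,x(s))\,ds, \qquad x\in C(I;\mathcal{H}).$$
First I would check that $\mathcal{R}$ is a well-defined self-map of $C(I;\mathcal{H})$: the growth bound \ref{Hg}(c) makes each $\mathcal{R}(x)(t)$ finite, the measurability in \ref{Hg}(a) together with dominated convergence gives continuity in $t$, and, for $x,y$ with values in $r\mathbb{B}$, the local Lipschitz estimate \ref{Hg}(b) yields
$$\|\mathcal{R}(x)(t)-\mathcal{R}(y)(t)\| \le \int_0^t \|g(t,s,x(s))-g(t,s,y(s))\|\,ds \le \mu_r(t)\int_0^t \|x(s)-y(s)\|\,ds.$$
This is exactly the history-dependent inequality of \ref{HR}, except that the multiplicative factor is the merely integrable function $\mu_r$ instead of a genuine constant $\kappa_{\mathcal{R}}$. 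Hence $\mathcal{R}$ does not fit verbatim into the hypotheses of Theorem \ref{teo:Existence1}.

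To remove this obstruction I would use the reparametrization technique of \cite{Vilches-2024}. The idea is to introduce a strictly increasing, absolutely continuous clock
$$\eta(t) := t + \int_0^t \bigl(|\dot v(s)| + \alpha(s) + \beta(s) + \mu_{r(T)}(s)\bigr)\,ds,$$
with inverse $\phi := \eta^{-1}$, where $r(T)$ is the a priori radius furnished by Theorem \ref{teo:Existence1}. Rewriting \eqref{Volterra1} in the variable $\tau = \eta(t)$ turns $C$ into a moving set that is Lipschitz (rather than merely absolutely continuous) in $\tau$, so \ref{HC} persists; the perturbation $f$ retains the structure of \ref{Hf}; and the integrable modulus $\mu_{r(T)}(t)\,dt$ is absorbed into $d\eta$, so that the rescaled Volterra operator becomes history-dependent with an honest constant and thus satisfies \ref{HR}. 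As in the proof of Theorem \ref{teo:Existence1}, the dependence of $\mu_r$ on the radius is neutralized by truncating through $\operatorname{proj}_{r(\cdot)\mathbb{B}}$ and checking a posteriori, via Lemma \ref{lem:enhanced}, that the solution never leaves the ball, so the truncation is inactive.

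With \ref{HC}, \ref{Hf} and \ref{HR} in force in the rescaled time, Theorem \ref{teo:Existence1} produces a unique absolutely continuous solution of the transformed inclusion. Pulling it back through the Lipschitz map $\phi$ gives an absolutely continuous solution of \eqref{Volterra1}; since $\eta$ is a bijection carrying solutions to solutions, uniqueness transfers back without change.

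I expect the main difficulty to lie in the reparametrization step, specifically in verifying that the two-time kernel $g(t,s,\cdot)$ transforms into a genuine constant-modulus history-dependent operator: the substitution $t=\phi(\tau)$, $s=\phi(\xi)$ must be tracked so that the rescaled modulus becomes bounded while the transformed set retains $\rho$-uniform prox-regularity and Lipschitz variation. The circular appearance of $r(T)$ inside $\mu_{r(T)}$ is a secondary point, resolved exactly as in Section \ref{section_4} by the truncation argument, which decouples the radius from the construction.
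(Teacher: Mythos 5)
Your proposal follows the same route as the paper: it defines $\mathcal{R}(x)(t):=\int_0^t g(t,s,x(s))\,ds$, observes that \ref{Hg} only yields a history-dependence inequality with an integrable modulus $\mu_r$ rather than the constant required by \ref{HR}, and then invokes the reparametrization technique of \cite{Vilches-2024} to reduce to Theorem \ref{teo:Existence1}. This is exactly the argument the paper gives (indeed, you supply more detail on the time change than the paper does), so the approaches coincide.
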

On the other hand, the existence result for the State-Dependent Volterra Sweeping Process is the following:
\begin{theorem}\label{teo:Volterra2}
    Assume that \ref{HCx}, \ref{Hf} and \ref{Hg} hold. Then, for any $x_0\in C(0,x_0)$, there exists at least one absolutely continuous solution $x(\cdot)$ for the problem \eqref{Volterra2}. 
\end{theorem}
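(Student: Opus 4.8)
The plan is to recognize \eqref{Volterra2} as a particular instance of the state-dependent history-dependent sweeping process \eqref{StateSP} and then to invoke Theorem \ref{teo:Existence2}. The natural candidate is the Volterra operator $\mathcal{R}(x)(t):=\int_0^t g(t,s,x(s))\,ds$, which a standard verification shows to be a well-defined map from $C(I;\mathcal{H})$ into itself under \ref{Hg}. For $x,y$ taking values in $r\mathbb{B}$, assumption \ref{Hg} yields
$$
\|\mathcal{R}(x)(t)-\mathcal{R}(y)(t)\|\le \int_0^t \|g(t,s,x(s))-g(t,s,y(s))\|\,ds\le \mu_r(t)\int_0^t \|x(s)-y(s)\|\,ds ,
$$
so $\mathcal{R}$ is only \emph{almost} history-dependent: the modulus $\mu_r(t)$ is merely integrable and sits \emph{outside} the integral, whereas \ref{HR} demands a genuine constant. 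I would therefore first extract an a priori radius $R$ from the linear growth bound \ref{Hg}(c) (via the enhanced Gronwall Lemma \ref{lem:enhanced}, whose $K_2(t)\int_0^tK_3(t,s)\rho(s)\,ds$ term is tailored to the kernel $\sigma(t,s)$), and truncate through $x\mapsto\operatorname{proj}_{R\mathbb{B}}(x)$; since $R$ comes from the growth alone, independently of $\mu_R$, this avoids any circularity and makes the local Lipschitz constant globally available.

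The heart of the argument is the reparametrization technique of \cite{Vilches-2024}. I would fix a strictly increasing absolutely continuous time change $\eta\colon I\to[0,S]$, $S:=\eta(T)$, whose derivative dominates all the integrable data governing $v$, $f$ and $g$ (so that, in particular, $\dot\eta(t)\ge 1+|\dot v(t)|+\alpha(t)+\beta(t)+\mu_R(t)$), whence $\eta^{-1}$ is $1$-Lipschitz. Setting $\hat x(\tau):=x(\eta^{-1}(\tau))$ and multiplying the inclusion by the nonnegative factor $(\eta^{-1})'(\tau)$—which leaves the normal cone unchanged, being invariant under multiplication by positive scalars—transforms \eqref{Volterra2} into a problem of type \eqref{StateSP} with data $\hat C(\tau,z):=C(\eta^{-1}(\tau),z)$, $\hat f(\tau,z):=(\eta^{-1})'(\tau)f(\eta^{-1}(\tau),z)$, and
$$
\widehat{\mathcal{R}}(\hat x)(\tau):=\int_0^\tau \hat g(\tau,\xi,\hat x(\xi))\,d\xi,\qquad \hat g(\tau,\xi,z):=(\eta^{-1})'(\tau)\,(\eta^{-1})'(\xi)\,g(\eta^{-1}(\tau),\eta^{-1}(\xi),z).
$$
The decisive gain is that $\widehat{\mathcal{R}}$ now carries the extra factor $(\eta^{-1})'(\tau)$: because $(\eta^{-1})'(\tau)\mu_R(\eta^{-1}(\tau))=\mu_R(\eta^{-1}(\tau))/\dot\eta(\eta^{-1}(\tau))\le 1$ and $(\eta^{-1})'(\xi)\le 1$, the modulus of $\widehat{\mathcal{R}}$ collapses to a genuine constant, so $\widehat{\mathcal{R}}$ satisfies \ref{HR} on $[0,S]$.

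It then remains to check that the reparametrized data still obey the standing hypotheses, which is routine: the sets $\hat C(\tau,z)$ coincide as sets with the original ones, hence remain $\rho$-uniformly prox-regular and preserve the compactness property \ref{HCx}(ii); the Hausdorff estimate \ref{HCx}(i) holds with the \emph{same} constant $L\in[0,1[$ and the absolutely continuous function $\hat v:=v\circ\eta^{-1}$; and $\hat f$ satisfies \ref{Hf} with reparametrized (now bounded) coefficients and the same local Lipschitz constants, since $(\eta^{-1})'\le 1$. Applying Theorem \ref{teo:Existence2} on $[0,S]$ produces an absolutely continuous $\hat x$; as $\eta$ is absolutely continuous and nondecreasing, $x:=\hat x\circ\eta$ is absolutely continuous, satisfies $x(0)=\hat x(0)=x_0\in C(0,x_0)$, and solves \eqref{Volterra2}. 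The a priori bound on $\hat x$ keeps $x$ inside $R\mathbb{B}$, so the truncation is inactive and $\widehat{\mathcal{R}}$ agrees with the true Volterra term; note that only existence (not uniqueness) is obtained, consistent with the Schauder-based Theorem \ref{teo:Existence2}.

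The step I expect to be the main obstacle is precisely the passage from the \emph{local} Lipschitz information of \ref{Hg}(b) to a \emph{global} history-dependent constant in the new time scale. One must simultaneously ensure that the truncation radius is genuinely independent of the Lipschitz modulus (hence the insistence on deriving $R$ from \ref{Hg}(c) alone), that the growth kernel $\sigma(t,s)$—which depends on \emph{both} variables—is correctly absorbed by $\dot\eta$ so that $\widehat{\mathcal{R}}$ also has controlled growth, and that multiplying the inclusion by $(\eta^{-1})'$ is legitimate, i.e.\ that the reparametrized velocity $\hat x'(\tau)=(\eta^{-1})'(\tau)\dot x(\eta^{-1}(\tau))$ is defined for a.e.\ $\tau$ and lands in the scaled normal cone (which follows since $\eta^{-1}$, being Lipschitz, carries null sets to null sets).
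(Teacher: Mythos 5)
Your proposal is correct and follows exactly the route the paper takes: the paper's entire argument for Theorem \ref{teo:Volterra2} is the one-sentence observation that the Volterra operator fails \ref{HR} only through the integrable (non-constant) modulus $\mu_r$, and that the reparametrization technique of \cite{Vilches-2024} reduces the problem to Theorem \ref{teo:Existence2}. Your write-up supplies the details the paper omits (the choice of $\dot\eta$, the collapse of $(\eta^{-1})'(\tau)\mu_R(\eta^{-1}(\tau))$ to a constant, the stability of \ref{HCx} and \ref{Hf} under the time change, and the truncation avoiding circularity), and these details are all sound.
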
   
\begin{remark} On the one hand, we note that Theorem \ref{teo:Volterra1} is not new; in fact, it was previously proven using different methods in \cite{MR4492538,Vilches-2024}. Our contribution lies in demonstrating that it can be derived through a fixed-point argument. On the other hand, to the best of our knowledge, the result presented in \ref{teo:Volterra1} is novel.
\end{remark}

\section{An Application to Viscoelastic Models with Long Memory}\label{mechanics}

In this section, we illustrate one of our theoretical results (Theorem \ref{teo:Existence1}) through the modeling of a contact mechanical problem with long memory. 

Let \(\Omega \subset \mathbb{R}^{d}\), with $d\in \{2,3\}$, be an open, bounded and connected set with a Lipschitz boundary \(\Gamma := \partial\Omega\). We assume that \(\Gamma\) can be decomposed into mutually disjoint measurable sets $\Gamma_{D}$, $\Gamma_{N}$ and $\Gamma_{C}$, with \(\Gamma_{D}\) having a positive Hausdorff measure. We are interested in the situation where \(\Omega\) describes a viscoelastic material with long memory, which is clamped on \(\Gamma_{D}\), subjected to a normal traction on \(\Gamma_{N}\), and experiences dry friction on \(\Gamma_{C}\) (see Fig. \ref{fig:ConfigPhysical}).
\begin{figure}[h!]
    \centering

    \tikzset{every picture/.style={line width=0.75pt}} 
    
    \begin{tikzpicture}[x=0.75pt,y=0.75pt,yscale=-0.6,xscale=0.6]
        
        \draw  [color={rgb, 255:red, 128; green, 128; blue, 128 }  ,draw opacity=1 ][fill={rgb, 255:red, 208; green, 2; blue, 27 }  ,fill opacity=0.3 ] (57.83,81.98) .. controls (94.63,67.17) and (196.22,63.81) .. (230.08,86.23) .. controls (263.94,108.65) and (295.23,123.46) .. (230.08,175.11) .. controls (164.93,226.76) and (105.68,220.41) .. (64.45,175.11) .. controls (23.23,129.8) and (21.02,96.79) .. (57.83,81.98) -- cycle ;
        \draw  [color={rgb, 255:red, 74; green, 144; blue, 226 }  ,draw opacity=1 ][fill={rgb, 255:red, 74; green, 144; blue, 226 }  ,fill opacity=0.3 ] (104.1,203.53) .. controls (161.88,229.57) and (204.21,183.65) .. (241.02,194.02) .. controls (277.82,204.39) and (272.3,199.9) .. (309.11,244.34) .. controls (345.92,288.78) and (62.51,272.48) .. (25.7,228.04) .. controls (-11.1,183.6) and (46.31,177.48) .. (104.1,203.53) -- cycle ;
        \draw [color={rgb, 255:red, 74; green, 74; blue, 74 }  ,draw opacity=1 ]   (85,202) -- (96.1,190.9) ;
        \draw [color={rgb, 255:red, 74; green, 74; blue, 74 }  ,draw opacity=1 ]   (188.1,208.3) -- (182.1,197.9) ;
        \draw [color={rgb, 255:red, 74; green, 74; blue, 74 }  ,draw opacity=1 ]   (119.1,76.3) -- (117.1,65.9) ;
        \draw  [dash pattern={on 0.84pt off 2.51pt}]  (126.32,61.98) .. controls (151.86,59.88) and (173.1,62.5) .. (197.1,67.3) .. controls (222.1,72.3) and (233.1,76.3) .. (249.1,88.3) .. controls (265.1,100.3) and (274.1,105.3) .. (276.1,119.3) .. controls (278.1,133.3) and (273.1,140.3) .. (264.1,154.3) .. controls (255.41,167.81) and (220.66,194.33) .. (196.67,205.15) ;
        \draw [shift={(194.1,206.27)}, rotate = 337.38] [fill={rgb, 255:red, 0; green, 0; blue, 0 }  ][line width=0.08]  [draw opacity=0] (8.93,-4.29) -- (0,0) -- (8.93,4.29) -- cycle    ;
        \draw [shift={(123.1,62.27)}, rotate = 354.5] [fill={rgb, 255:red, 0; green, 0; blue, 0 }  ][line width=0.08]  [draw opacity=0] (8.93,-4.29) -- (0,0) -- (8.93,4.29) -- cycle    ;
        \draw  [dash pattern={on 0.84pt off 2.51pt}]  (76.43,195.57) .. controls (59.63,183.93) and (18.36,140.43) .. (24.1,107.3) .. controls (30,73.29) and (75.04,64.94) .. (106.25,64.3) ;
        \draw [shift={(109.1,64.27)}, rotate = 179.68] [fill={rgb, 255:red, 0; green, 0; blue, 0 }  ][line width=0.08]  [draw opacity=0] (8.93,-4.29) -- (0,0) -- (8.93,4.29) -- cycle    ;
        \draw [shift={(79.1,197.27)}, rotate = 209.58] [fill={rgb, 255:red, 0; green, 0; blue, 0 }  ][line width=0.08]  [draw opacity=0] (8.93,-4.29) -- (0,0) -- (8.93,4.29) -- cycle    ;
        \draw [color={rgb, 255:red, 0; green, 0; blue, 0 }  ,draw opacity=1 ] [dash pattern={on 0.84pt off 2.51pt}]  (91.86,206.74) .. controls (120.54,221.67) and (135.92,227.53) .. (180.34,211.32) ;
        \draw [shift={(183.1,210.3)}, rotate = 159.44] [fill={rgb, 255:red, 0; green, 0; blue, 0 }  ,fill opacity=1 ][line width=0.08]  [draw opacity=0] (8.93,-4.29) -- (0,0) -- (8.93,4.29) -- cycle    ;
        \draw [shift={(89.1,205.3)}, rotate = 27.66] [fill={rgb, 255:red, 0; green, 0; blue, 0 }  ,fill opacity=1 ][line width=0.08]  [draw opacity=0] (8.93,-4.29) -- (0,0) -- (8.93,4.29) -- cycle    ;
        \draw [color={rgb, 255:red, 0; green, 0; blue, 0 }  ,draw opacity=0.63 ]   (51.58,114.87) -- (46.82,127.23) ;
        \draw [shift={(46.1,129.1)}, rotate = 291.07] [color={rgb, 255:red, 0; green, 0; blue, 0 }  ,draw opacity=0.63 ][line width=0.75]    (10.93,-3.29) .. controls (6.95,-1.4) and (3.31,-0.3) .. (0,0) .. controls (3.31,0.3) and (6.95,1.4) .. (10.93,3.29)   ;
        \draw [color={rgb, 255:red, 0; green, 0; blue, 0 }  ,draw opacity=0.63 ]   (111,180) -- (116.89,195.97) ;
        \draw [shift={(117.58,197.85)}, rotate = 249.76] [color={rgb, 255:red, 0; green, 0; blue, 0 }  ,draw opacity=0.63 ][line width=0.75]    (10.93,-3.29) .. controls (6.95,-1.4) and (3.31,-0.3) .. (0,0) .. controls (3.31,0.3) and (6.95,1.4) .. (10.93,3.29)   ;
        \draw [color={rgb, 255:red, 0; green, 0; blue, 0 }  ,draw opacity=0.63 ]   (139.58,171.65) -- (146.31,187.26) ;
        \draw [shift={(147.1,189.1)}, rotate = 246.7] [color={rgb, 255:red, 0; green, 0; blue, 0 }  ,draw opacity=0.63 ][line width=0.75]    (10.93,-3.29) .. controls (6.95,-1.4) and (3.31,-0.3) .. (0,0) .. controls (3.31,0.3) and (6.95,1.4) .. (10.93,3.29)   ;
        \draw [color={rgb, 255:red, 0; green, 0; blue, 0 }  ,draw opacity=0.63 ]   (72.58,124.87) -- (71.29,138.11) ;
        \draw [shift={(71.1,140.1)}, rotate = 275.56] [color={rgb, 255:red, 0; green, 0; blue, 0 }  ,draw opacity=0.63 ][line width=0.75]    (10.93,-3.29) .. controls (6.95,-1.4) and (3.31,-0.3) .. (0,0) .. controls (3.31,0.3) and (6.95,1.4) .. (10.93,3.29)   ;
        \draw [color={rgb, 255:red, 0; green, 0; blue, 0 }  ,draw opacity=0.63 ]   (117.58,158.65) -- (123.36,173.24) ;
        \draw [shift={(124.1,175.1)}, rotate = 248.39] [color={rgb, 255:red, 0; green, 0; blue, 0 }  ,draw opacity=0.63 ][line width=0.75]    (10.93,-3.29) .. controls (6.95,-1.4) and (3.31,-0.3) .. (0,0) .. controls (3.31,0.3) and (6.95,1.4) .. (10.93,3.29)   ;
        \draw [color={rgb, 255:red, 0; green, 0; blue, 0 }  ,draw opacity=0.63 ]   (141.58,148.65) -- (149.98,161.01) ;
        \draw [shift={(151.1,162.67)}, rotate = 235.83] [color={rgb, 255:red, 0; green, 0; blue, 0 }  ,draw opacity=0.63 ][line width=0.75]    (10.93,-3.29) .. controls (6.95,-1.4) and (3.31,-0.3) .. (0,0) .. controls (3.31,0.3) and (6.95,1.4) .. (10.93,3.29)   ;
        \draw [color={rgb, 255:red, 0; green, 0; blue, 0 }  ,draw opacity=0.63 ]   (160.58,170.85) -- (174.12,183.49) ;
        \draw [shift={(175.58,184.85)}, rotate = 223.03] [color={rgb, 255:red, 0; green, 0; blue, 0 }  ,draw opacity=0.63 ][line width=0.75]    (10.93,-3.29) .. controls (6.95,-1.4) and (3.31,-0.3) .. (0,0) .. controls (3.31,0.3) and (6.95,1.4) .. (10.93,3.29)   ;
        \draw [color={rgb, 255:red, 0; green, 0; blue, 0 }  ,draw opacity=0.63 ]   (93.58,136.87) -- (89.65,150.74) ;
        \draw [shift={(89.1,152.67)}, rotate = 285.84] [color={rgb, 255:red, 0; green, 0; blue, 0 }  ,draw opacity=0.63 ][line width=0.75]    (10.93,-3.29) .. controls (6.95,-1.4) and (3.31,-0.3) .. (0,0) .. controls (3.31,0.3) and (6.95,1.4) .. (10.93,3.29)   ;
        \draw [color={rgb, 255:red, 0; green, 0; blue, 0 }  ,draw opacity=0.63 ]   (141.58,124.87) -- (149.78,134.17) ;
        \draw [shift={(151.1,135.67)}, rotate = 228.61] [color={rgb, 255:red, 0; green, 0; blue, 0 }  ,draw opacity=0.63 ][line width=0.75]    (10.93,-3.29) .. controls (6.95,-1.4) and (3.31,-0.3) .. (0,0) .. controls (3.31,0.3) and (6.95,1.4) .. (10.93,3.29)   ;
        \draw [color={rgb, 255:red, 0; green, 0; blue, 0 }  ,draw opacity=0.63 ]   (156.58,96.87) -- (170.27,102.86) ;
        \draw [shift={(172.1,103.67)}, rotate = 203.66] [color={rgb, 255:red, 0; green, 0; blue, 0 }  ,draw opacity=0.63 ][line width=0.75]    (10.93,-3.29) .. controls (6.95,-1.4) and (3.31,-0.3) .. (0,0) .. controls (3.31,0.3) and (6.95,1.4) .. (10.93,3.29)   ;
        \draw [color={rgb, 255:red, 0; green, 0; blue, 0 }  ,draw opacity=0.63 ]   (186,175) -- (200.53,186.43) ;
        \draw [shift={(202.1,187.67)}, rotate = 218.19] [color={rgb, 255:red, 0; green, 0; blue, 0 }  ,draw opacity=0.63 ][line width=0.75]    (10.93,-3.29) .. controls (6.95,-1.4) and (3.31,-0.3) .. (0,0) .. controls (3.31,0.3) and (6.95,1.4) .. (10.93,3.29)   ;
        \draw [color={rgb, 255:red, 0; green, 0; blue, 0 }  ,draw opacity=0.63 ]   (161.58,150.85) -- (174.55,161.4) ;
        \draw [shift={(176.1,162.67)}, rotate = 219.15] [color={rgb, 255:red, 0; green, 0; blue, 0 }  ,draw opacity=0.63 ][line width=0.75]    (10.93,-3.29) .. controls (6.95,-1.4) and (3.31,-0.3) .. (0,0) .. controls (3.31,0.3) and (6.95,1.4) .. (10.93,3.29)   ;
        \draw [color={rgb, 255:red, 0; green, 0; blue, 0 }  ,draw opacity=0.63 ]   (161.58,77.87) -- (177.17,82.14) ;
        \draw [shift={(179.1,82.67)}, rotate = 195.32] [color={rgb, 255:red, 0; green, 0; blue, 0 }  ,draw opacity=0.63 ][line width=0.75]    (10.93,-3.29) .. controls (6.95,-1.4) and (3.31,-0.3) .. (0,0) .. controls (3.31,0.3) and (6.95,1.4) .. (10.93,3.29)   ;
        \draw [color={rgb, 255:red, 0; green, 0; blue, 0 }  ,draw opacity=0.63 ]   (174.58,113.65) -- (187.35,120.7) ;
        \draw [shift={(189.1,121.67)}, rotate = 208.91] [color={rgb, 255:red, 0; green, 0; blue, 0 }  ,draw opacity=0.63 ][line width=0.75]    (10.93,-3.29) .. controls (6.95,-1.4) and (3.31,-0.3) .. (0,0) .. controls (3.31,0.3) and (6.95,1.4) .. (10.93,3.29)   ;
        \draw [color={rgb, 255:red, 0; green, 0; blue, 0 }  ,draw opacity=0.63 ]   (159.58,132.65) -- (171.55,142.4) ;
        \draw [shift={(173.1,143.67)}, rotate = 219.18] [color={rgb, 255:red, 0; green, 0; blue, 0 }  ,draw opacity=0.63 ][line width=0.75]    (10.93,-3.29) .. controls (6.95,-1.4) and (3.31,-0.3) .. (0,0) .. controls (3.31,0.3) and (6.95,1.4) .. (10.93,3.29)   ;
        \draw [color={rgb, 255:red, 0; green, 0; blue, 0 }  ,draw opacity=0.63 ]   (186.58,152.85) -- (200.44,162.09) ;
        \draw [shift={(202.1,163.2)}, rotate = 213.7] [color={rgb, 255:red, 0; green, 0; blue, 0 }  ,draw opacity=0.63 ][line width=0.75]    (10.93,-3.29) .. controls (6.95,-1.4) and (3.31,-0.3) .. (0,0) .. controls (3.31,0.3) and (6.95,1.4) .. (10.93,3.29)   ;
        \draw [color={rgb, 255:red, 0; green, 0; blue, 0 }  ,draw opacity=0.63 ]   (219,157) -- (235.21,162.55) ;
        \draw [shift={(237.1,163.2)}, rotate = 198.91] [color={rgb, 255:red, 0; green, 0; blue, 0 }  ,draw opacity=0.63 ][line width=0.75]    (10.93,-3.29) .. controls (6.95,-1.4) and (3.31,-0.3) .. (0,0) .. controls (3.31,0.3) and (6.95,1.4) .. (10.93,3.29)   ;
        \draw [color={rgb, 255:red, 0; green, 0; blue, 0 }  ,draw opacity=0.63 ]   (203,131) -- (219.34,139.72) ;
        \draw [shift={(221.1,140.67)}, rotate = 208.11] [color={rgb, 255:red, 0; green, 0; blue, 0 }  ,draw opacity=0.63 ][line width=0.75]    (10.93,-3.29) .. controls (6.95,-1.4) and (3.31,-0.3) .. (0,0) .. controls (3.31,0.3) and (6.95,1.4) .. (10.93,3.29)   ;
        \draw [color={rgb, 255:red, 0; green, 0; blue, 0 }  ,draw opacity=0.63 ]   (185.58,92.87) -- (200.18,97.11) ;
        \draw [shift={(202.1,97.67)}, rotate = 196.2] [color={rgb, 255:red, 0; green, 0; blue, 0 }  ,draw opacity=0.63 ][line width=0.75]    (10.93,-3.29) .. controls (6.95,-1.4) and (3.31,-0.3) .. (0,0) .. controls (3.31,0.3) and (6.95,1.4) .. (10.93,3.29)   ;
        \draw [color={rgb, 255:red, 0; green, 0; blue, 0 }  ,draw opacity=0.63 ]   (179,133) -- (195.34,141.72) ;
        \draw [shift={(197.1,142.67)}, rotate = 208.11] [color={rgb, 255:red, 0; green, 0; blue, 0 }  ,draw opacity=0.63 ][line width=0.75]    (10.93,-3.29) .. controls (6.95,-1.4) and (3.31,-0.3) .. (0,0) .. controls (3.31,0.3) and (6.95,1.4) .. (10.93,3.29)   ;
        \draw [color={rgb, 255:red, 0; green, 0; blue, 0 }  ,draw opacity=0.63 ]   (197,112) -- (213.34,120.72) ;
        \draw [shift={(215.1,121.67)}, rotate = 208.11] [color={rgb, 255:red, 0; green, 0; blue, 0 }  ,draw opacity=0.63 ][line width=0.75]    (10.93,-3.29) .. controls (6.95,-1.4) and (3.31,-0.3) .. (0,0) .. controls (3.31,0.3) and (6.95,1.4) .. (10.93,3.29)   ;
        \draw [color={rgb, 255:red, 0; green, 0; blue, 0 }  ,draw opacity=0.63 ]   (222,113) -- (238.34,121.72) ;
        \draw [shift={(240.1,122.67)}, rotate = 208.11] [color={rgb, 255:red, 0; green, 0; blue, 0 }  ,draw opacity=0.63 ][line width=0.75]    (10.93,-3.29) .. controls (6.95,-1.4) and (3.31,-0.3) .. (0,0) .. controls (3.31,0.3) and (6.95,1.4) .. (10.93,3.29)   ;
        \draw [color={rgb, 255:red, 0; green, 0; blue, 0 }  ,draw opacity=0.63 ]   (59.58,140.87) -- (60.92,155.21) ;
        \draw [shift={(61.1,157.2)}, rotate = 264.69] [color={rgb, 255:red, 0; green, 0; blue, 0 }  ,draw opacity=0.63 ][line width=0.75]    (10.93,-3.29) .. controls (6.95,-1.4) and (3.31,-0.3) .. (0,0) .. controls (3.31,0.3) and (6.95,1.4) .. (10.93,3.29)   ;
        \draw [color={rgb, 255:red, 0; green, 0; blue, 0 }  ,draw opacity=0.63 ]   (78.58,153.87) -- (79.94,170.21) ;
        \draw [shift={(80.1,172.2)}, rotate = 265.27] [color={rgb, 255:red, 0; green, 0; blue, 0 }  ,draw opacity=0.63 ][line width=0.75]    (10.93,-3.29) .. controls (6.95,-1.4) and (3.31,-0.3) .. (0,0) .. controls (3.31,0.3) and (6.95,1.4) .. (10.93,3.29)   ;
        \draw [color={rgb, 255:red, 0; green, 0; blue, 0 }  ,draw opacity=0.63 ]   (130.58,192.85) -- (137.3,208.36) ;
        \draw [shift={(138.1,210.2)}, rotate = 246.58] [color={rgb, 255:red, 0; green, 0; blue, 0 }  ,draw opacity=0.63 ][line width=0.75]    (10.93,-3.29) .. controls (6.95,-1.4) and (3.31,-0.3) .. (0,0) .. controls (3.31,0.3) and (6.95,1.4) .. (10.93,3.29)   ;
        \draw [color={rgb, 255:red, 0; green, 0; blue, 0 }  ,draw opacity=0.63 ]   (94.58,163.65) -- (95.94,180.21) ;
        \draw [shift={(96.1,182.2)}, rotate = 265.33] [color={rgb, 255:red, 0; green, 0; blue, 0 }  ,draw opacity=0.63 ][line width=0.75]    (10.93,-3.29) .. controls (6.95,-1.4) and (3.31,-0.3) .. (0,0) .. controls (3.31,0.3) and (6.95,1.4) .. (10.93,3.29)   ;
        \draw [color={rgb, 255:red, 0; green, 0; blue, 0 }  ,draw opacity=0.63 ]   (211,88) -- (227.34,96.72) ;
        \draw [shift={(229.1,97.67)}, rotate = 208.11] [color={rgb, 255:red, 0; green, 0; blue, 0 }  ,draw opacity=0.63 ][line width=0.75]    (10.93,-3.29) .. controls (6.95,-1.4) and (3.31,-0.3) .. (0,0) .. controls (3.31,0.3) and (6.95,1.4) .. (10.93,3.29)   ;
        \draw [color={rgb, 255:red, 0; green, 0; blue, 0 }  ,draw opacity=0.63 ]   (235,133) -- (251.34,141.72) ;
        \draw [shift={(253.1,142.67)}, rotate = 208.11] [color={rgb, 255:red, 0; green, 0; blue, 0 }  ,draw opacity=0.63 ][line width=0.75]    (10.93,-3.29) .. controls (6.95,-1.4) and (3.31,-0.3) .. (0,0) .. controls (3.31,0.3) and (6.95,1.4) .. (10.93,3.29)   ;
        \draw  [color={rgb, 255:red, 208; green, 2; blue, 27 }  ,draw opacity=1 ][fill={rgb, 255:red, 208; green, 2; blue, 27 }  ,fill opacity=0.3 ] (386.83,80.3) .. controls (423.63,65.48) and (525.22,62.13) .. (559.08,84.54) .. controls (592.94,106.96) and (624.23,121.78) .. (559.08,173.43) .. controls (493.93,225.08) and (434.68,218.73) .. (393.45,173.43) .. controls (352.23,128.12) and (350.02,95.11) .. (386.83,80.3) -- cycle ;
        \draw  [color={rgb, 255:red, 74; green, 144; blue, 226 }  ,draw opacity=1 ][fill={rgb, 255:red, 74; green, 144; blue, 226 }  ,fill opacity=0.3 ] (433.1,201.84) .. controls (490.88,227.89) and (533.21,181.97) .. (570.02,192.34) .. controls (606.82,202.71) and (601.3,198.21) .. (638.11,242.65) .. controls (674.92,287.09) and (391.51,270.8) .. (354.7,226.36) .. controls (317.9,181.92) and (375.31,175.8) .. (433.1,201.84) -- cycle ;
        \draw [color={rgb, 255:red, 74; green, 74; blue, 74 }  ,draw opacity=1 ]   (414,200.32) -- (425.1,189.22) ;
        \draw [color={rgb, 255:red, 74; green, 74; blue, 74 }  ,draw opacity=1 ]   (517.1,206.62) -- (511.1,196.22) ;
        \draw [color={rgb, 255:red, 74; green, 74; blue, 74 }  ,draw opacity=1 ]   (448.1,74.62) -- (446.1,64.22) ;
        \draw  [dash pattern={on 0.84pt off 2.51pt}]  (455.32,60.3) .. controls (480.86,58.2) and (502.1,60.82) .. (526.1,65.62) .. controls (551.1,70.62) and (562.1,74.62) .. (578.1,86.62) .. controls (594.1,98.62) and (603.1,103.62) .. (605.1,117.62) .. controls (607.1,131.62) and (602.1,138.62) .. (593.1,152.62) .. controls (584.41,166.13) and (549.66,192.64) .. (525.67,203.47) ;
        \draw [shift={(523.1,204.58)}, rotate = 337.38] [fill={rgb, 255:red, 0; green, 0; blue, 0 }  ][line width=0.08]  [draw opacity=0] (8.93,-4.29) -- (0,0) -- (8.93,4.29) -- cycle    ;
        \draw [shift={(452.1,60.58)}, rotate = 354.5] [fill={rgb, 255:red, 0; green, 0; blue, 0 }  ][line width=0.08]  [draw opacity=0] (8.93,-4.29) -- (0,0) -- (8.93,4.29) -- cycle    ;
        \draw  [dash pattern={on 0.84pt off 2.51pt}]  (405.43,193.88) .. controls (388.63,182.25) and (347.36,138.75) .. (353.1,105.62) .. controls (359,71.61) and (404.04,63.25) .. (435.25,62.62) ;
        \draw [shift={(438.1,62.58)}, rotate = 179.68] [fill={rgb, 255:red, 0; green, 0; blue, 0 }  ][line width=0.08]  [draw opacity=0] (8.93,-4.29) -- (0,0) -- (8.93,4.29) -- cycle    ;
        \draw [shift={(408.1,195.58)}, rotate = 209.58] [fill={rgb, 255:red, 0; green, 0; blue, 0 }  ][line width=0.08]  [draw opacity=0] (8.93,-4.29) -- (0,0) -- (8.93,4.29) -- cycle    ;
        \draw [color={rgb, 255:red, 0; green, 0; blue, 0 }  ,draw opacity=1 ] [dash pattern={on 0.84pt off 2.51pt}]  (420.86,205.06) .. controls (449.54,219.99) and (464.92,225.84) .. (509.34,209.64) ;
        \draw [shift={(512.1,208.62)}, rotate = 159.44] [fill={rgb, 255:red, 0; green, 0; blue, 0 }  ,fill opacity=1 ][line width=0.08]  [draw opacity=0] (8.93,-4.29) -- (0,0) -- (8.93,4.29) -- cycle    ;
        \draw [shift={(418.1,203.62)}, rotate = 27.66] [fill={rgb, 255:red, 0; green, 0; blue, 0 }  ,fill opacity=1 ][line width=0.08]  [draw opacity=0] (8.93,-4.29) -- (0,0) -- (8.93,4.29) -- cycle    ;
        \draw [color={rgb, 255:red, 0; green, 0; blue, 0 }  ,draw opacity=0.63 ]   (469.1,34.42) -- (469.1,55.42) ;
        \draw [shift={(469.1,57.42)}, rotate = 270] [color={rgb, 255:red, 0; green, 0; blue, 0 }  ,draw opacity=0.63 ][line width=0.75]    (10.93,-3.29) .. controls (6.95,-1.4) and (3.31,-0.3) .. (0,0) .. controls (3.31,0.3) and (6.95,1.4) .. (10.93,3.29)   ;
        \draw [color={rgb, 255:red, 0; green, 0; blue, 0 }  ,draw opacity=0.63 ]   (497.1,36.42) -- (495.27,57.42) ;
        \draw [shift={(495.1,59.42)}, rotate = 274.97] [color={rgb, 255:red, 0; green, 0; blue, 0 }  ,draw opacity=0.63 ][line width=0.75]    (10.93,-3.29) .. controls (6.95,-1.4) and (3.31,-0.3) .. (0,0) .. controls (3.31,0.3) and (6.95,1.4) .. (10.93,3.29)   ;
        \draw [color={rgb, 255:red, 0; green, 0; blue, 0 }  ,draw opacity=0.63 ]   (518.1,39.42) -- (515.36,60.43) ;
        \draw [shift={(515.1,62.42)}, rotate = 277.43] [color={rgb, 255:red, 0; green, 0; blue, 0 }  ,draw opacity=0.63 ][line width=0.75]    (10.93,-3.29) .. controls (6.95,-1.4) and (3.31,-0.3) .. (0,0) .. controls (3.31,0.3) and (6.95,1.4) .. (10.93,3.29)   ;
        \draw [color={rgb, 255:red, 0; green, 0; blue, 0 }  ,draw opacity=0.63 ]   (543.67,43.88) -- (537.99,64.19) ;
        \draw [shift={(537.45,66.12)}, rotate = 285.62] [color={rgb, 255:red, 0; green, 0; blue, 0 }  ,draw opacity=0.63 ][line width=0.75]    (10.93,-3.29) .. controls (6.95,-1.4) and (3.31,-0.3) .. (0,0) .. controls (3.31,0.3) and (6.95,1.4) .. (10.93,3.29)   ;
        \draw [color={rgb, 255:red, 0; green, 0; blue, 0 }  ,draw opacity=0.63 ]   (566.88,51.29) -- (561.21,71.59) ;
        \draw [shift={(560.67,73.52)}, rotate = 285.62] [color={rgb, 255:red, 0; green, 0; blue, 0 }  ,draw opacity=0.63 ][line width=0.75]    (10.93,-3.29) .. controls (6.95,-1.4) and (3.31,-0.3) .. (0,0) .. controls (3.31,0.3) and (6.95,1.4) .. (10.93,3.29)   ;
        \draw [color={rgb, 255:red, 0; green, 0; blue, 0 }  ,draw opacity=0.63 ]   (589.77,63.28) -- (578.27,80.85) ;
        \draw [shift={(577.17,82.52)}, rotate = 303.22] [color={rgb, 255:red, 0; green, 0; blue, 0 }  ,draw opacity=0.63 ][line width=0.75]    (10.93,-3.29) .. controls (6.95,-1.4) and (3.31,-0.3) .. (0,0) .. controls (3.31,0.3) and (6.95,1.4) .. (10.93,3.29)   ;
        \draw [color={rgb, 255:red, 0; green, 0; blue, 0 }  ,draw opacity=0.63 ]   (612.1,80.3) -- (599.06,96.87) ;
        \draw [shift={(597.83,98.44)}, rotate = 308.19] [color={rgb, 255:red, 0; green, 0; blue, 0 }  ,draw opacity=0.63 ][line width=0.75]    (10.93,-3.29) .. controls (6.95,-1.4) and (3.31,-0.3) .. (0,0) .. controls (3.31,0.3) and (6.95,1.4) .. (10.93,3.29)   ;
        \draw [color={rgb, 255:red, 0; green, 0; blue, 0 }  ,draw opacity=0.63 ]   (629.19,100.99) -- (610.96,111.8) ;
        \draw [shift={(609.24,112.82)}, rotate = 329.33] [color={rgb, 255:red, 0; green, 0; blue, 0 }  ,draw opacity=0.63 ][line width=0.75]    (10.93,-3.29) .. controls (6.95,-1.4) and (3.31,-0.3) .. (0,0) .. controls (3.31,0.3) and (6.95,1.4) .. (10.93,3.29)   ;
        \draw [color={rgb, 255:red, 0; green, 0; blue, 0 }  ,draw opacity=0.63 ]   (580.3,199.93) -- (567.89,182.89) ;
        \draw [shift={(566.71,181.27)}, rotate = 53.93] [color={rgb, 255:red, 0; green, 0; blue, 0 }  ,draw opacity=0.63 ][line width=0.75]    (10.93,-3.29) .. controls (6.95,-1.4) and (3.31,-0.3) .. (0,0) .. controls (3.31,0.3) and (6.95,1.4) .. (10.93,3.29)   ;
        \draw [color={rgb, 255:red, 0; green, 0; blue, 0 }  ,draw opacity=0.63 ]   (560.89,214.18) -- (548.47,197.14) ;
        \draw [shift={(547.29,195.52)}, rotate = 53.93] [color={rgb, 255:red, 0; green, 0; blue, 0 }  ,draw opacity=0.63 ][line width=0.75]    (10.93,-3.29) .. controls (6.95,-1.4) and (3.31,-0.3) .. (0,0) .. controls (3.31,0.3) and (6.95,1.4) .. (10.93,3.29)   ;
        \draw [color={rgb, 255:red, 0; green, 0; blue, 0 }  ,draw opacity=0.63 ]   (632.78,141.2) -- (613.25,133.48) ;
        \draw [shift={(611.39,132.75)}, rotate = 21.57] [color={rgb, 255:red, 0; green, 0; blue, 0 }  ,draw opacity=0.63 ][line width=0.75]    (10.93,-3.29) .. controls (6.95,-1.4) and (3.31,-0.3) .. (0,0) .. controls (3.31,0.3) and (6.95,1.4) .. (10.93,3.29)   ;
        \draw [color={rgb, 255:red, 0; green, 0; blue, 0 }  ,draw opacity=0.63 ]   (616.77,168.82) -- (601.55,154.22) ;
        \draw [shift={(600.11,152.84)}, rotate = 43.81] [color={rgb, 255:red, 0; green, 0; blue, 0 }  ,draw opacity=0.63 ][line width=0.75]    (10.93,-3.29) .. controls (6.95,-1.4) and (3.31,-0.3) .. (0,0) .. controls (3.31,0.3) and (6.95,1.4) .. (10.93,3.29)   ;
        \draw [color={rgb, 255:red, 0; green, 0; blue, 0 }  ,draw opacity=0.63 ]   (601.26,183.29) -- (586.61,167.98) ;
        \draw [shift={(585.23,166.53)}, rotate = 46.27] [color={rgb, 255:red, 0; green, 0; blue, 0 }  ,draw opacity=0.63 ][line width=0.75]    (10.93,-3.29) .. controls (6.95,-1.4) and (3.31,-0.3) .. (0,0) .. controls (3.31,0.3) and (6.95,1.4) .. (10.93,3.29)   ;
        \draw [color={rgb, 255:red, 0; green, 0; blue, 0 }  ,draw opacity=0.63 ]   (430.47,214.59) .. controls (450.22,222.75) and (468.43,225.26) .. (492.7,219.99) ;
        \draw [shift={(494.58,219.57)}, rotate = 167.01] [color={rgb, 255:red, 0; green, 0; blue, 0 }  ,draw opacity=0.63 ][line width=0.75]    (10.93,-3.29) .. controls (6.95,-1.4) and (3.31,-0.3) .. (0,0) .. controls (3.31,0.3) and (6.95,1.4) .. (10.93,3.29)   ;
        \draw [shift={(428.58,213.8)}, rotate = 23.2] [color={rgb, 255:red, 0; green, 0; blue, 0 }  ,draw opacity=0.63 ][line width=0.75]    (10.93,-3.29) .. controls (6.95,-1.4) and (3.31,-0.3) .. (0,0) .. controls (3.31,0.3) and (6.95,1.4) .. (10.93,3.29)   ;
        \draw [color={rgb, 255:red, 0; green, 0; blue, 0 }  ,draw opacity=0.63 ]   (110.58,140.87) -- (107.52,155.14) ;
        \draw [shift={(107.1,157.1)}, rotate = 282.11] [color={rgb, 255:red, 0; green, 0; blue, 0 }  ,draw opacity=0.63 ][line width=0.75]    (10.93,-3.29) .. controls (6.95,-1.4) and (3.31,-0.3) .. (0,0) .. controls (3.31,0.3) and (6.95,1.4) .. (10.93,3.29)   ;
        \draw [color={rgb, 255:red, 0; green, 0; blue, 0 }  ,draw opacity=0.63 ]   (123.58,134.87) -- (130.64,148.1) ;
        \draw [shift={(131.58,149.87)}, rotate = 241.93] [color={rgb, 255:red, 0; green, 0; blue, 0 }  ,draw opacity=0.63 ][line width=0.75]    (10.93,-3.29) .. controls (6.95,-1.4) and (3.31,-0.3) .. (0,0) .. controls (3.31,0.3) and (6.95,1.4) .. (10.93,3.29)   ;
        \draw [color={rgb, 255:red, 0; green, 0; blue, 0 }  ,draw opacity=0.63 ]   (136.58,81.87) -- (152.17,86.14) ;
        \draw [shift={(154.1,86.67)}, rotate = 195.32] [color={rgb, 255:red, 0; green, 0; blue, 0 }  ,draw opacity=0.63 ][line width=0.75]    (10.93,-3.29) .. controls (6.95,-1.4) and (3.31,-0.3) .. (0,0) .. controls (3.31,0.3) and (6.95,1.4) .. (10.93,3.29)   ;
        \draw [color={rgb, 255:red, 0; green, 0; blue, 0 }  ,draw opacity=0.63 ]   (134.58,100.87) -- (148.93,110.63) ;
        \draw [shift={(150.58,111.75)}, rotate = 214.22] [color={rgb, 255:red, 0; green, 0; blue, 0 }  ,draw opacity=0.63 ][line width=0.75]    (10.93,-3.29) .. controls (6.95,-1.4) and (3.31,-0.3) .. (0,0) .. controls (3.31,0.3) and (6.95,1.4) .. (10.93,3.29)   ;
        \draw [color={rgb, 255:red, 0; green, 0; blue, 0 }  ,draw opacity=0.63 ]   (122.58,111.87) -- (129.64,125.1) ;
        \draw [shift={(130.58,126.87)}, rotate = 241.93] [color={rgb, 255:red, 0; green, 0; blue, 0 }  ,draw opacity=0.63 ][line width=0.75]    (10.93,-3.29) .. controls (6.95,-1.4) and (3.31,-0.3) .. (0,0) .. controls (3.31,0.3) and (6.95,1.4) .. (10.93,3.29)   ;
        \draw [color={rgb, 255:red, 0; green, 0; blue, 0 }  ,draw opacity=0.63 ]   (106.58,116.87) -- (103.52,131.14) ;
        \draw [shift={(103.1,133.1)}, rotate = 282.11] [color={rgb, 255:red, 0; green, 0; blue, 0 }  ,draw opacity=0.63 ][line width=0.75]    (10.93,-3.29) .. controls (6.95,-1.4) and (3.31,-0.3) .. (0,0) .. controls (3.31,0.3) and (6.95,1.4) .. (10.93,3.29)   ;
        \draw [color={rgb, 255:red, 0; green, 0; blue, 0 }  ,draw opacity=0.63 ]   (86.58,98.87) -- (85.29,112.11) ;
        \draw [shift={(85.1,114.1)}, rotate = 275.56] [color={rgb, 255:red, 0; green, 0; blue, 0 }  ,draw opacity=0.63 ][line width=0.75]    (10.93,-3.29) .. controls (6.95,-1.4) and (3.31,-0.3) .. (0,0) .. controls (3.31,0.3) and (6.95,1.4) .. (10.93,3.29)   ;
        \draw [color={rgb, 255:red, 0; green, 0; blue, 0 }  ,draw opacity=0.63 ]   (64.58,98.87) -- (63.29,112.11) ;
        \draw [shift={(63.1,114.1)}, rotate = 275.56] [color={rgb, 255:red, 0; green, 0; blue, 0 }  ,draw opacity=0.63 ][line width=0.75]    (10.93,-3.29) .. controls (6.95,-1.4) and (3.31,-0.3) .. (0,0) .. controls (3.31,0.3) and (6.95,1.4) .. (10.93,3.29)   ;
        \draw [color={rgb, 255:red, 0; green, 0; blue, 0 }  ,draw opacity=0.63 ]   (49.58,91.87) -- (44.82,104.23) ;
        \draw [shift={(44.1,106.1)}, rotate = 291.07] [color={rgb, 255:red, 0; green, 0; blue, 0 }  ,draw opacity=0.63 ][line width=0.75]    (10.93,-3.29) .. controls (6.95,-1.4) and (3.31,-0.3) .. (0,0) .. controls (3.31,0.3) and (6.95,1.4) .. (10.93,3.29)   ;
        \draw [color={rgb, 255:red, 0; green, 0; blue, 0 }  ,draw opacity=0.63 ]   (77.58,82.75) -- (75.79,99.76) ;
        \draw [shift={(75.58,101.75)}, rotate = 276.01] [color={rgb, 255:red, 0; green, 0; blue, 0 }  ,draw opacity=0.63 ][line width=0.75]    (10.93,-3.29) .. controls (6.95,-1.4) and (3.31,-0.3) .. (0,0) .. controls (3.31,0.3) and (6.95,1.4) .. (10.93,3.29)   ;
        \draw [color={rgb, 255:red, 0; green, 0; blue, 0 }  ,draw opacity=0.63 ]   (109.58,79.87) -- (123.93,89.63) ;
        \draw [shift={(125.58,90.75)}, rotate = 214.22] [color={rgb, 255:red, 0; green, 0; blue, 0 }  ,draw opacity=0.63 ][line width=0.75]    (10.93,-3.29) .. controls (6.95,-1.4) and (3.31,-0.3) .. (0,0) .. controls (3.31,0.3) and (6.95,1.4) .. (10.93,3.29)   ;
        \draw [color={rgb, 255:red, 0; green, 0; blue, 0 }  ,draw opacity=0.63 ]   (159.58,184.65) -- (166.31,200.26) ;
        \draw [shift={(167.1,202.1)}, rotate = 246.7] [color={rgb, 255:red, 0; green, 0; blue, 0 }  ,draw opacity=0.63 ][line width=0.75]    (10.93,-3.29) .. controls (6.95,-1.4) and (3.31,-0.3) .. (0,0) .. controls (3.31,0.3) and (6.95,1.4) .. (10.93,3.29)   ;
        
        \draw (242.9,118.62) node [anchor=north west][inner sep=0.75pt]    {$\Omega$};
        \draw (201.96,230.84) node [anchor=north west][inner sep=0.75pt]   [align=left] {Foundation};
        \draw (3,55.4) node [anchor=north west][inner sep=0.75pt]    {$\Gamma_{D}$};
        \draw (99.1,89.07) node [anchor=north west][inner sep=0.75pt]    {$f_{0}{}$};
        \draw (471.9,110.94) node [anchor=north west][inner sep=0.75pt]    {$\Omega$};
        \draw (537.96,228.16) node [anchor=north west][inner sep=0.75pt]   [align=left] {Foundation};
        \draw (636.81,124.35) node [anchor=north west][inner sep=0.75pt]    {$f_{2}$};
        \draw (450.83,227.35) node [anchor=north west][inner sep=0.75pt]    {$f_{3}$};
        \draw (125,230.4) node [anchor=north west][inner sep=0.75pt]    {$\Gamma_{C}$};
        \draw (284,129.4) node [anchor=north west][inner sep=0.75pt]    {$\Gamma_{N}$};
        \draw (374,105.4) node [anchor=north west][inner sep=0.75pt]    {$\Gamma_{D}$};
        \draw (452,185.4) node [anchor=north west][inner sep=0.75pt]    {$\Gamma_{C}$};
        \draw (566,121.4) node [anchor=north west][inner sep=0.75pt]    {$\Gamma_{N}$};
    \end{tikzpicture}
    
    \caption{A viscoelastic material with long memory subjected to a external body force $f_{0}$ (left), normal traction $f_{2}$ and dry friction $f_{3}$ (right).}
    \label{fig:ConfigPhysical}
\end{figure}
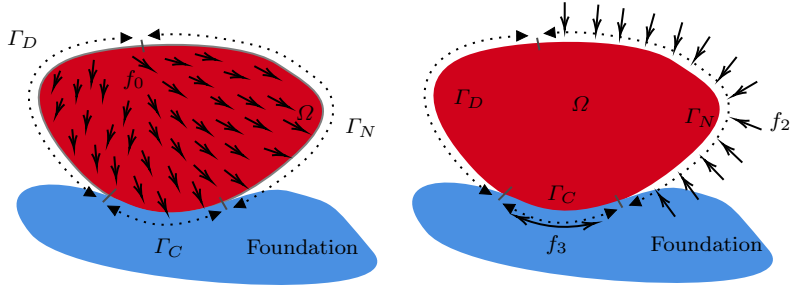

To describe the mathematical model,  let us consider \(\mathbb{S}^{d}\) as the space of symmetric second-order tensors over \(\mathbb{R}^{d}\). On \(\mathbb{R}^{d}\) and \(\mathbb{S}^{d}\), we consider the inner products
$$
    \langle u, v \rangle_{\mathbb{R}^{d}} := \sum_{i=1}^{d} u_{i} v_{i}
    \,\, \text{and} \,\,
    \langle \sigma, \tau \rangle_{\mathbb{S}^{d}} := \sum_{i,j=1}^{d} \sigma_{ij} \tau_{ij} \,\, \text{for all } u, v \in \mathbb{R}^{d},  \sigma, \tau \in \mathbb{S}^{d}.
$$
These inner products induce the norms \(\|u\|_{\mathbb{R}^{d}} := \sqrt{\langle u, u \rangle_{\mathbb{R}^{d}}}\) and \(\|\sigma\|_{\mathbb{S}^{d}} := \sqrt{\langle \sigma, \sigma \rangle_{\mathbb{S}^{d}}}\) on \(\mathbb{R}^{d}\) and \(\mathbb{S}^{d}\), respectively. For simplicity, we will denote these norms by \(\|\cdot\|\) whenever no ambiguity arises.

Given $u\in (H^{1}(\Omega))^{d}$ we denote by $\varepsilon(u)$ the \emph{strain operator} defined by $\varepsilon(u) := (\varepsilon_{ij}(u))_{i,j=1,\dots,d}\in \mathbb{S}^{d}$, where 
\begin{eqnarray*}
   \varepsilon_{ij}(u) := \frac{1}{2}\left(\frac{\partial u_{i}}{\partial x_{j}} + \frac{\partial u_{j}}{\partial x_{i}} \right).
\end{eqnarray*}
To model the situation depicted in Fig. \ref{fig:ConfigPhysical}, we  consider the vector spaces:
\begin{align*}
    V := \{u\in(H^{1}(\Omega))^{d} : u=0 \textrm{ on }\Gamma_{D}\} \textrm{ and } W := \{\sigma \in \mathbb{S}^{d}  : \sigma_{ij} = \sigma_{ji} \in L^{2}(\Omega)\},
\end{align*}
where the equality $u=0$ on $\Gamma_{D}$ is understood in the sense of traces. The above spaces are endowed with the inner products:
\begin{eqnarray*}
    \langle u,v \rangle_{V} := \int_{\Omega} \varepsilon(u(x))\cdot\varepsilon(v(x)) dx \quad\textrm{ and }\quad \langle \sigma,\tau \rangle_{W} := \int_{\Omega} \sigma(x) \cdot \tau(x) dx.
\end{eqnarray*}

We denote by \(\nu\) the unit outward vector to \(\Gamma\). The normal and tangential components of the vector \(u \in \mathbb{R}^{d}\) are \(u_{\nu}:=u \cdot \nu \) and \(u_{\tau}:=u - u_{\nu} \nu\), respectively. Similarly, \(\sigma_{\nu}:=(\sigma \nu) \cdot \nu\) and \(\sigma_{\tau}:=\sigma \nu - \sigma_{\nu} \nu\) are the normal and tangential components of the tensor \(\sigma \in \mathbb{S}^{d}\), respectively.

\paragraph{Description of the mechanical model:} The considered constitutive law has the structure of a general viscoelastic law with long memory, including a non-linear integral part (see \cite{MR2976197}), given by 
\begin{eqnarray*}
    \sigma(t,x) :=
    \mathcal{A}(x,\varepsilon(u(t,x)))
    +
    \int_{0}^{t} \mathcal{B}(t-s,x,\varepsilon(u(t,x)))\,ds,
\end{eqnarray*}
for all $(t,x)\in I\times \Omega$, where $\sigma(t,x)$ denotes the stress tensor, $u(t,x)$ is the displacement field and $\mathcal{A}$ and $\mathcal{B}$ denote the \emph{elasticity} and the \emph{relaxation} operators, respectively.  The above constitutive law describes the stress response of a material that exhibits both immediate elastic behavior and time-dependent memory effects, which are present, for example, in polymers, biological tissues, and certain metals. It provides a framework for predicting phenomena such as creep and stress relaxation. For further details on applications and notation, we refer to \cite{MR2976197,MR4403784,MR3908332,MR3752610}.

The problem can be formulated as follows. 
\begin{problem}[Contact problem]
    \label{problema-mecanico}
    Find a displacement vector field $u\colon I\times \Omega \to \mathbb{R}^d$ such that
    \begin{align}
        \operatorname{div}(\sigma(t,x))+f_0(t,x)&=0 \quad\quad\quad\quad  \textrm{ for all } (t,x)\in I\times \Omega, \label{p.1}\\
        u(t,x)&=0 \quad\quad \quad\quad \textrm{ for all } (t,x)\in I\times \Gamma_{D}, \label{p.2}\\
        \sigma(t,x)\nu(x)&=f_2(t,x)  \,\,\,\,\,\,\,\,\, \textrm{ for all } (t,x)\in I\times \Gamma_{N}, \label{p.3}\\
        u_{\nu}(t,x)&=0 \quad\quad \quad\quad\textrm{ for all } (t,x)\in I\times \Gamma_{C}. \label{p.4}
    \end{align}
    Moreover, dry friction is acting on $\Gamma_{C}$, i.e., for all $(t,x)\in I\times \Gamma_{C}$ 
    \begin{equation}
        \label{dry-friction}
        \Vert \sigma_{\tau}(t,x)\Vert \leq f_{3}(t,x) \textrm{ and } -\sigma_{\tau}(t,x)=f_3(t,x)\frac{\dot{u}_{\tau}(t,x)}{\Vert \dot{u}_{\tau}(t,x)\Vert} \textrm{ if }  \dot{u}_{\tau}(t,x)\neq 0.
    \end{equation}
\end{problem}
Equation \eqref{p.1} in Problem \ref{problema-mecanico} represents the equilibrium balance between the stress tensor and an external force field  $f_{0}(t,x)$, while \eqref{p.2} indicates that the set $\Omega$ is clamped on $\Gamma_{D}$. Moreover,  \eqref{p.3} states that a normal traction force $f_{2}(t,x)$ is applied on  $\Gamma_{N}$. Equation \eqref{p.4} specifies that no normal displacements are allowed in the contact region $\Gamma_C$. Finally, \eqref{dry-friction} represents the dry friction, defined by a friction bound $f_{3}(t,x)$.

\paragraph{Assumptions on the data:} In order to provide the well-posedness for the Problem \ref{problema-mecanico}, we consider the following hypotheses:

\noindent $(\mathcal{H}^{\Gamma})$: Assumptions on $f_{0}$, $f_{2}$ and $f_{3}$:
\begin{itemize}
    \item[(i)] There exists an absolutely continuous function $\vartheta_0$ such that
    $$
    \Vert f_0(t,\cdot)-f_0(s,\cdot)\Vert_{(L^{2}(\Omega))^{d}} \leq \vert \vartheta_0(t)-\vartheta_0(s)\vert \textrm{ for all } t,s\in I.
    $$
    \item[(ii)] There exists an absolutely continuous function $\vartheta_2$ such that
    $$
    \Vert f_2(t,\cdot)-f_2(s,\cdot)\Vert_{(L^{2}(\Omega))^{d}} \leq \vert \vartheta_2(t)-\vartheta_2(s)\vert \textrm{ for all } t,s\in I.
    $$
    \item[(iii)] For all $t\in I$, the map $x\to f_3(t,x)$ is measurable  and $f_3(0,x)=0$ for a.e. $x\in \Gamma_{C}$.
\end{itemize}

\noindent $(\mathcal{H}^{\sigma})$: Assumptions on $\mathcal{A}$ and $\mathcal{B}$:
\begin{enumerate}
    \item[(i)] The \textit{elasticity operator} $\mathcal{A}=(a_{ijkl})$ defined from $\Omega\times\mathbb{S}^{d}$ into $\mathbb{S}^{d}$, is a linear application respect his second variable, such that satisfies:\\
    \noindent $(a)$ The components are symmetric, i.e., $a_{ijkl}=a_{jikl}=a_{klij}$, and $a_{ijkl}$ is essentially bounded, i.e., $a_{ijkl}\in L^{\infty}(\Omega\times\mathbb{S}^{d})$\\
    \noindent $(b)$ There exists $m_{\mathcal{A}}>0$ such that
    \begin{eqnarray*}
        \langle \mathcal{A}(x,\varepsilon),\varepsilon\rangle_{\mathbb{S}^{d}} \geq m_{\mathcal{A}}\|\varepsilon\|_{\mathbb{S}^{d}}^{2} \quad\textrm{ for all }\varepsilon\in\mathbb{S}^{d}.
    \end{eqnarray*}
    \item[(ii)] The \textit{relaxation operator} $\mathcal{B}=(b_{ijkl})$ defined from $I\times\Omega\times\mathbb{S}^{d}$ into $\mathbb{S}^{d}$ satisfies the following conditions:\\
    \noindent $(a)$ The components are symmetric, i.e., $b_{ijkl} = b_{jikl} = b_{klij}$. \\
    \noindent $(b)$ For all $\varepsilon\in\mathbb{S}^{d}$ the map $t\mapsto b_{ijkl}(t,\cdot,\varepsilon)$ belongs to $W^{1,2}(I;L^{\infty}(\Omega))$.\\
    \noindent $(c)$ There exist nonnegative constants  $\kappa_{\mathcal{B}}$ and $\kappa_{\mathcal{B}^{\prime}}$ such that
    \begin{itemize}
        \item For all $(t,x)\in I\times\Omega$ and any $\varepsilon_{1},\varepsilon_{2}\in\mathbb{S}^{d}$ one has 
        \begin{align*}
            &\|\mathcal{B}(t,x,\varepsilon_{1}) - \mathcal{B}(t,x,\varepsilon_{2})\|_{\mathbb{S}^{d}} \leq \kappa_{\mathcal{B}} \|\varepsilon_{1} - \varepsilon_{2}\|_{\mathbb{S}^{d}}.
        \end{align*}
        \item For a.e. $t\in I$, for all $x\in\Omega$ and  any $\varepsilon_{1},\varepsilon_{2}\in\mathbb{S}^{d}$ one has
        \begin{align*}
            \|\mathcal{B}'(t,x,\varepsilon_{1}) - \mathcal{B}'(t,x,\varepsilon_{2})\|_{\mathbb{S}^{d}} \leq \kappa_{\mathcal{B}^{\prime}} \|\varepsilon_{1} - \varepsilon_{2}\|_{\mathbb{S}^{d}}.
        \end{align*}
    \end{itemize}
    \noindent$(d)$ For any $(t,x)\in I\times\Omega$ one has
    \begin{eqnarray*}
        \langle \mathcal{B}(t,x,\varepsilon_{1}) - \mathcal{B}(t,x,\varepsilon_{2}) , \varepsilon_{1}-\varepsilon_{2} \rangle_{\mathbb{S}^{d}} \geq 0 \quad\textrm{for all } \varepsilon_{1},\varepsilon_{2}\in\mathbb{S}^{d}.
    \end{eqnarray*}
\end{enumerate}
Now, based on standard calculations, the weak formulation of \ref{problema-mecanico} is given by:
\begin{problem}[Variational inequality]
    \label{problem_6.2}
    Find a displacement field $u(t,x)$ such that the following inequality holds: for all $v\in V$,
    \begin{align*} 
        \int_{\Omega} \sigma(t,x) : ( \varepsilon(v(x)) - \varepsilon(\dot{u}(t,x)) ) dx + \int_{\Gamma_{C}} f_{3}(t,x) (\|v_{\tau}(x)\| - \|\dot{u}_{\tau}(t,x)\|) da(x)\\
        \quad\quad\quad\quad\quad \geq 
        \int_{\Omega} f_{0}(t,x) (v(x) - \dot{u}(t,x)) dx 
        + \int_{\Gamma_{N}} f_{2}(t,x) ( v(x) - \dot{u}(t,x) )da(x).
    \end{align*}
\end{problem}
To apply our theoretical result (Theorem \ref{teo:Existence1}), we introduce the following auxiliary functions.
\begin{definition}[Auxiliary functions]
    \label{def:AuxiliaryFunctions}
    Let $f_{0}, f_{2}, f_{3}, \mathcal{A}$ and $\mathcal{B}$ satisfying the assumptions $(\mathcal{H}^{\Gamma})$ and $(\mathcal{H}^{\sigma})$. We consider the following functions: 
    \begin{enumerate}
        \item Let $j\colon V\to\R$ defined by
        $$
            j(u) = \int_{\Gamma_{C}} f_{3}(t,x) \|u_{\tau}(x)\| da(x) \quad\textrm{ for all }t\in I.
        $$
        \item Let $f \colon I \to  V$ defined by 
        $$ 
            \langle f(t) , u \rangle_{V} := \int_{\Omega} f_{0}(t,x) \cdot u(x)dx + \int_{\Gamma_{N}} f_{2}(t,x) \cdot u(x) da(x) \quad \textrm{ for all } t\in I.
        $$  
        \item Let $A\colon V\to V$ and $B\colon C(I;V) \to C(I;V)$ be such that:
        \begin{align*}
            \langle A(u),v \rangle _{V}   &= \int_{\Omega} \mathcal{A}(x,\varepsilon(u(x))) \cdot \varepsilon(v(x)) dx \\
            \langle B(u)(t),v \rangle_{V} &= \int_{\Omega} \left( \int_{0}^{t} \mathcal{B}(t-s,x, \varepsilon(u(s,x)))ds \right) \cdot \varepsilon(v(x)) dx.
        \end{align*}
        Furthermore, we set $P\colon V\to V$ be such that $A = P^{\ast} P$, where $P^{\ast}$ denotes the adjoint operator of $P$ and $Q:=(P^{\ast})^{-1}$.

        \item Let $C\colon I \rightrightarrows V$ be a set-valued map defined by $C(t) := Q(f(t) - \partial j(0))$.

        \item Let $R\colon C(I;V)\to C(I;V)$ be defined by
    \end{enumerate}
    \begin{align*}
        \langle R(u)(t) , v\rangle_{V} 
        =& \int_{\Omega}  \mathcal{B}(0,x,\varepsilon(u(t,x)))\cdot  \varepsilon(v(x)) dx \\
        &\quad \quad \quad + \int_{\Omega} \left(\int_{0}^{t} \mathcal{B}'(t-s,x,\varepsilon(u(s,x)))ds \right) \cdot  \varepsilon(v(x)) dx.
    \end{align*}
\end{definition}
We now present the main result of this section, which establishes the existence and uniqueness of a weak solution to Problem \ref{problem_6.2}. Furthermore, we demonstrate that this solution satisfies a related history-dependent sweeping process.
\begin{theorem}
    Assume that $(\mathcal{H}^{\Gamma})$ and $(\mathcal{H}^{\sigma})$ hold. Then, the Problem \eqref{problem_6.2} admits a unique absolutely continuous solution $u\colon I \to V$ to the differential inclusion:
    \begin{equation*}
    \left\{
    \begin{aligned}
        \dot{u}(t) & \in -N_{C(t)}(u(t)) + QR(u)(t) \quad\textrm{ a.e. }t\in I,\\
        u(0)       & = Pu_{0},
    \end{aligned}
    \right.
    \end{equation*}
    where $Q=(P^{\ast})^{-1}$ and $A=P^{\ast}P$.
\end{theorem}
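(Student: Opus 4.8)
The plan is to show that the variational inequality in Problem \ref{problem_6.2} is equivalent, after an affine change of unknown, to a history-dependent sweeping process of the form \eqref{eq:SP1}, and then to invoke Theorem \ref{teo:Existence1}. Using the operators of Definition \ref{def:AuxiliaryFunctions}, the stress reads, in the weak sense, $\sigma(t)=A(u(t))+B(u)(t)$, so that for every $v\in V$ the inequality of Problem \ref{problem_6.2} becomes
\[
    \langle f(t)-A(u(t))-B(u)(t),\,v-\dot u(t)\rangle_V \le j(v)-j(\dot u(t)).
\]
By the very definition of the Fenchel subdifferential recalled in the Preliminaries, this is equivalent to the inclusion $f(t)-A(u(t))-B(u)(t)\in\partial j(\dot u(t))$, which I shall denote by $(\ast)$.

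\textbf{Passage to a sweeping process.} Since $j$ is positively homogeneous of degree one, it coincides with the support function $\sigma_K$ of the closed convex set $K:=\partial j(0)$; hence, by the conjugacy relations $\sigma_K=(\iota_K)^\ast$ and the equivalences recalled in the Preliminaries, $\xi\in\partial j(w)$ if and only if $w\in N_K(\xi)=\partial\iota_K(\xi)$. Applied with $w=\dot u(t)$, $(\ast)$ reads $\dot u(t)\in N_K\!\left(f(t)-A(u(t))-B(u)(t)\right)$. I then introduce the new unknown
\[
    z(t):=Pu(t)+Q\,B(u)(t),\qquad Q=(P^\ast)^{-1},\ A=P^\ast P,
\]
for which $P^\ast z(t)=A(u(t))+B(u)(t)$, so that the argument of the normal cone equals $f(t)-P^\ast z(t)$ and the constraint $f(t)-P^\ast z(t)\in K$ is exactly $z(t)\in C(t)=Q(f(t)-K)$. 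Transporting the normal cone through the affine map $z\mapsto f(t)-P^\ast z$, that is, computing $\partial\big(\iota_K\circ(f(t)-P^\ast\cdot)\big)$ by the convex chain rule, yields $N_{C(t)}(z(t))=-P\,N_K(\xi(t))$. Finally, the Leibniz rule applied to $\langle B(u)(t),v\rangle_V$ shows that $R=\tfrac{d}{dt}B$, whence $\dot z=P\dot u+QR(u)$; combining these facts turns $(\ast)$ into
\[
    \dot z(t)\in -N_{C(t)}(z(t))+QR(u)(t),\qquad z(0)=Pu_0,
\]
which is the announced differential inclusion, the initial condition following from $B(u)(0)=0$.

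\textbf{Verification of hypotheses and conclusion.} To apply Theorem \ref{teo:Existence1} I check \ref{HC}, \ref{Hf} and \ref{HR}. The set $C(t)=Qf(t)-QK$ is convex, hence $\rho$-uniformly prox-regular for every $\rho>0$ (cf. Proposition \ref{prop:prox}); its Hausdorff--Lipschitz regularity required by \ref{HC} follows from the absolute continuity of $t\mapsto f(t)$ encoded through $\vartheta_0,\vartheta_2$ in $(\mathcal H^\Gamma)$, together with the motion of $K=\partial j(0)$ governed by the friction bound $t\mapsto f_3(t,\cdot)$; here $f_3(0,\cdot)=0$ collapses $C(0)$ to the singleton $\{Qf(0)\}$ and forces the natural compatibility relation $Au_0=f(0)$. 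The perturbation $QR(u)$ splits, through the pointwise/history structure of $R$ and the Lipschitz estimates on $\mathcal B,\mathcal B'$ in $(\mathcal H^\sigma)$, into a current-state Lipschitz part (fulfilling \ref{Hf}) and a genuinely history-dependent part (fulfilling \ref{HR}); the displacement $u$ is recovered causally from $z$ via the Volterra-type identity $z=Pu+QB(u)$, uniquely solvable by the fixed-point Proposition \ref{teo:fixed}, so these operators are well defined as history-dependent maps of $z$. Theorem \ref{teo:Existence1} then delivers a unique absolutely continuous $z$, and the bijective causal correspondence $z\leftrightarrow u$ returns the unique absolutely continuous solution $u$ of Problem \ref{problem_6.2}.

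\textbf{Main obstacle.} The crux is the passage of the second paragraph together with the regularity bookkeeping of the third: one must justify rigorously the normal-cone transport under $P^\ast$ and, above all, the Hausdorff--Lipschitz continuity of $t\mapsto C(t)$, which demands controlling the \emph{joint} motion of the translation $Qf(t)$ and of the friction set $K=\partial j(0)$ --- the latter being the delicate point in view of the weak time-regularity imposed on $f_3$. The remaining verifications (the identity $R=\tfrac{d}{dt}B$, the splitting of $QR$ into its \ref{Hf} and \ref{HR} components, and the causal invertibility of $z=Pu+QB(u)$) are technical but routine once the correct change of variables is identified.
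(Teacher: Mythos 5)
Your proposal follows essentially the same route as the paper: rewrite the variational inequality as $f(t)-A(u(t))-B(u)(t)\in\partial j(\dot u(t))$, invert the subdifferential via the conjugacy $\partial j=\partial\sigma_{\partial j(0)}$, change unknown to $w=Pu+QB(u)$ (invertible with Lipschitz inverse), transport the normal cone to obtain the sweeping process $\dot w\in -N_{C(t)}(w)+QR(\mathcal{G}^{-1}(w))(t)$, and apply Theorem \ref{teo:Existence1}. Your verification of \ref{HC}, \ref{Hf} and \ref{HR} is in fact slightly more explicit than the paper's, which simply asserts that these hypotheses hold.
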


\begin{proof}
    According to Definition \ref{def:AuxiliaryFunctions}, the Problem \ref{problem_6.2} is equivalent to the following variational inequality: find $u\colon I\to V$ such that 
    \begin{align*}
        j(\dot{u}(t)) \geq j(v) + \langle f(t) - A(u(t)) - B(u)(t) , \dot{u}(t) - v  \rangle_{V} \textrm{ for all } (t,v)\in I\times V.
    \end{align*}
    Since the function $j\colon V\to \mathbb{R}$ is convex, the above inequality is equivalent to: 
    \begin{eqnarray*}
        f(t) - A(u(t)) - B(u)(t) \in \partial j( \dot{u}(t) ) \quad\textrm{ for all }t \in I.
    \end{eqnarray*}
    Moreover, we observe that $j$ is positively homogeneous, which implies that  $$\partial j(\dot{u}(t)) = \partial \sigma_{\partial j(0)}(\dot{u}(t)) \textrm{ for a.e.  } t\in I.$$
    Therefore, for a.e. $t\in I$, one has
    \begin{equation*}
        \begin{aligned}
            \dot{u}(t) & \in
             \partial \sigma_{\partial j(0)}^{*} (f(t) - A(u(t)) - B(u)(t))  \\
            &= N_{\partial j(0)} (f(t) - A(u(t)) - B(u)(t))\\
            &= N_{\partial j(0)-f(t)}(-P^{\ast} P(u(t)) - B(u)(t))\\
            &=-N_{f(t)-\partial j(0)}(P^{\ast} P(u(t)) + B(u)(t)).
        \end{aligned}
    \end{equation*}
    Following the ideas of \cite{MR3912745}, we consider the operator $\mathcal{G}\colon V \to V$ defined by 
    $$\mathcal{G}(u)(t) := Pu(t) + QB(u)(t).$$
    It is clear that $\mathcal{G}$ is invertible with Lipschitz inverse (see, e.g., \cite[Proposition 31.4]{MR1033498}). Therefore, setting  $w(t):=\mathcal{G}(u)(t)$, we obtain that which gives
    $$
        \dot{w}(t) \in  -N_{C(t)}(w(t)) + QR(\mathcal{G}^{-1}(w))(t) \textrm{ a.e. } t\in I.
    $$
    It can be noted that $QR\mathcal{G}^{-1}$, and $C$ satisfy the hypotheses $(\mathcal{H}^{R})$ and $ (\mathcal{H}^{C})$, respectively. Therefore, by virtue of Theorem \ref{teo:Existence1}, there exists a unique solution $u\in\operatorname{AC}(I;V)$ for the history-dependent sweeping process.
 \end{proof}

 \begin{acknowledgements}
The third author was supported by ANID Chile under grants Fondecyt Regular  N$^{\circ}$ 1240120, Fondecyt Regular N$^{\circ}$ 1220886,  Proyecto de Exploraci\'on 13220097,  CMM BASAL funds for Center of Excellence FB210005, Project ECOS230027, MATH-AMSUD 23-MATH-17).
\end{acknowledgements}


\end{document}